\newtheorem{assumption}{Assumption}
\newtheorem{theorem}{Theorem}
\newtheorem{corollary}{Corollary}
\newtheorem{lemma}{Lemma}
\newcommand{\vv}[1]{\mathbf{#1}}
\newcommand{\blind}{1}
\begin{document}

\def\spacingset#1{\renewcommand{\baselinestretch}%
{#1}\small\normalsize} \spacingset{1}


\if1\blind
{
  \title{\bf Robustness and Tractability for Non-convex M-estimators}
  \author{Ruizhi Zhang,
    Yajun Mei,
    Jianjun Shi,
    and
    Huan Xu\\
    H. Milton Stewart School of Industrial and Systems Engineering, \\
    Georgia Institute of Technology
    }
  \maketitle
} \fi

\if0\blind
{
  \bigskip
  \bigskip
  \bigskip
  \begin{center}
    {\LARGE\bf Robustness and Tractability for Non-convex M-estimators}
\end{center}
  \medskip
} \fi

\bigskip
\begin{abstract}
We investigate two important properties of M-estimator, namely, robustness and tractability, in  linear regression setting, when the observations are contaminated by some arbitrary outliers.
Specifically, robustness means the statistical property that the estimator should always be close to the underlying true parameters {\em regardless of the distribution of the outliers}, and tractability indicates the computational property that the estimator can be computed efficiently, even if the objective function of the M-estimator is {\em non-convex}.
In this article, by learning the landscape of the empirical risk, we show that under mild conditions, many M-estimators enjoy nice robustness and tractability properties simultaneously, when the percentage of outliers is small.
We further extend our analysis to the high-dimensional setting, where the number of parameters is greater than the number of samples, $p \gg n$, and prove that when the proportion of outliers is small, the penalized M-estimators with {\em $L_1$} penalty will enjoy robustness and tractability simultaneously. Our research provides an analytic approach to see the effects of outliers and tuning parameters on the robustness and tractability  for some families of  M-estimators. Simulation and case study are presented to illustrate the usefulness of our theoretical results for M-estimators under Welsch's exponential squared loss.
\end{abstract}

\noindent%
{\it Keywords:}  computational tractability, gross error,  high-dimensionality, non-convexity,robust regression, sparsity
\vfill

\newpage
\spacingset{1.45} 
\section{Introduction}


M-estimation plays an important role in linear regression due to its robustness and flexibility.
From the statistical viewpoint, it has been shown that many M-estimators enjoy desirable robustness properties in the presence of outliers, as well as asymptotic normality when the data are normally distributed without outliers. Some general theoretical properties and review of robust M-estimators can be found in \cite{bai:1992,huber:2009,cheng:2010,hampel:2011,el:2013}. In the high-dimensional setting, where the dimensionality is greater than the number of samples, penalized M-estimators have been widely used to tackle the challenges of outliers and have been used for sparse recovery and variable selection, see \cite{lambert:2011,li:2011,wang:2013,loh:2017}.   However, from the computational tractability perspective, it is often not easy to compute the M-estimators, since optimization problems over non-convex loss functions are usually involved. Moreover, the tractability issue may become more challenging when the data are contaminated by some arbitrary outliers, which is essentially the situation where robust M-estimator is designed to tackle.

This paper aims to investigate two important properties of   M-estimators, {\em robustness} and {\em tractability}, simultaneously under {\em the gross error model}. Specifically, we assume the data generation model is $y_i=\langle\theta_0, x_i\rangle+\epsilon_i,$ where $y_i\in \mathbb{R}, x_i \in \mathbb{R}^p,$ , for $i = 1, \cdots, n$,  and the noise term $\epsilon_i$'s are from  Huber's gross error model \citep{huber:1964}:  $\epsilon_i\sim (1-\delta)f_0 +\delta g$, for $i = 1, \cdots, n$. Here, $f_0$  denotes the probability density function (pdf) of the noise of the normal samples, which has  the desirable properties, such as zero mean and finite variance; $g$ denotes the pdf  of the outliers (contaminations), which may also depend on the explanatory variable $x_i$, for $i = 1, \cdots, n$. One thing to notice is that we do not require the mean of $g$ to be $0.$  The parameter  $\delta \in [0,1]$, denotes the percentage of the contaminations, which is also known as the contamination ratio in robust statistics literature.   The gross error model indicates that for the $i^{th}$ sample, the residual term $\epsilon_i$ is generated from the pdf $f_0$ with probability $1-\delta,$  and from the pdf $g$ with probability $\delta.$ It is important to point out that the residual $\epsilon_i$ is independent of $x_i$ and other $x_{j}$'s when it is from the pdf $f_0,$
but can be dependent with the variable $x_i$ when it is from the pdf $g.$

In the first part of this paper, we start with the low-dimensional case when the dimension $p$ is fixed. We consider the robust M-estimation with a constraint on the $\ell_2$ norm of $\theta$. Mathematically, we study the following optimization problem:
\begin{eqnarray}\label{form}
&&\underset{\theta}{\mbox{Minimize:}}\quad\hat{R}_{n}(\theta):=\frac{1}{n} \sum_{i=1}^{n}\rho(y_i-\langle\theta, x_i\rangle),\\
&&\text{subject to:     }\quad \|\theta\|_2\le r.\nonumber
\end{eqnarray}
Here, $\rho: \mathbb{R}\to \mathbb{R}$ is the loss function, and  is often {\em non-convex}.
We consider the problem with the  $\ell_2$ constraint due to three reasons: first, it is well know the constrainted optimization problem in (\ref{form}) is equivalent to the unconstrained optimization problem with a $\ell_2$ regularizer. Therefore, it is related to the Ridge regression, which can alleviate multicollinearity amongst regression predictors. Second, by considering the problem of (\ref{form}) in a compact ball with radius $r,$ it guarantees the existence of the global optimal, which is necessary for establishing the tractability properties of the M-estimator. Finally, by working on the constrained optimization problem, we can avoid technical complications and establish the uniform convergence theorems of the empirical risk and population risk. Besides, the constrained M-estimators are widely used and studied in the literature, see \citet{geyer:1994,mei:2016,loh:2017} for more details. To be consistent with the assumptions used in the literature, in the current work, we assume $r$ is a constant and the true parameter $\theta_0$ is inside of the ball.

In the second part, we extend our research to the high-dimensional case, where $p\gg n$ and the true parameter  $\theta_0$ is sparse.
In order to achieve the sparsity in the resulting estimator, we consider the penalized M-estimator with the $\ell_1$ regularizer:
\begin{eqnarray}\label{form33}
&&\underset{\theta}{\mbox{Minimize:}}\quad\hat{L}_{n}(\theta):=\frac{1}{n} \sum_{i=1}^{n}\rho(y_i-\langle\theta, x_i\rangle)+\lambda_n||\theta||_1,\\
&&\text{subject to:     }\quad \|\theta\|_2\le r.\nonumber
\end{eqnarray}
Note the corresponding penalized M-estimator with the $\ell_2$ constraint is related to the Elastic net, which overcomes the limitations of the LASSO type regularization \citep{zou:2005}.

In both parts, we will show that (in the finite sample setting,) the M-estimator obtained from (\ref{form}) or (\ref{form33}) is robust in the sense that all stationary points of empirical risk function $\hat{R}_n(\theta)$ or $\hat{L}_n(\theta)$ are bounded in the neighborhood of the true parameter $\theta_0$  when the proportion of outliers is small.  In addition, we will show that with a high probability,  there is a unique stationary point of the empirical risk function, which is the global minimizer of (\ref{form}) or (\ref{form33}) for some general (possibly nonconvex) loss functions $\rho$. This implies that the M-estimator can  be computed efficiently. To illustrate our general theoretical results, we study some specific M-estimators with Huber's loss \citep{huber:1964} and Welsch's exponential squared loss \citep{dennis:1978}, and explicitly discuss how the tuning parameter and percentage of outliers affect the robustness and tractability of the corresponding M-estimators.

Our research makes several fundamental contributions on the field of robust statistics and non-convex optimization.
First, we demonstrate the uniform convergence results for the gradient and Hessian of the empirical risk to the population risk under the gross error model. Second, we provide nonasymptotic upper bound of the estimation error for the general M-estimators, which nearly achieve the minimax error bound in \cite{chen:2016}. Third, we investigate the computational tractability of the general non-convex M-estimators under the gross error model and show when the contamination ratio $\delta$ is small, there is only one unique stationary point of the empirical risk function. Therefore, efficient algorithms such as gradient descent or proximal gradient decent can be guaranteed to  converge to a
unique global minimizer irrespective of the initialization. Our general results also imply the following interesting and to some extent surprising statement: the percentage of outliers has an impact on the {\em tractability} of non-convex M-estimators. In a nutshell, the estimation and the corresponding optimization problem become more difficult both in terms of solution quality and computational efficiency when more outliers appear. While the former is well expected, we find the latter~--~that more outliers make M-estimators more difficult to numerically compute --~an interesting and somewhat surprising  discovery.  Our simulation results and case study also verify this phenomenon.

\subsubsection*{Related works}
Since Huber's pioneer work on robust M-estimators \citep{huber:1964}, many M-estimators with different choices of loss functions have been proposed, e.g.,  Huber's loss \citep{huber:1964}, Andrew's sine loss \citep{andrews:1972}, Tukey's Bisquare loss \citep{beaton:1974}, Welsch's exponential squared loss \citep{dennis:1978}, to name a few.  From the statistical perspective, much research has been done to investigate the robustness of M-estimators such as large breakdown point \citep{donoho:1983,mizera:1999,alfons:2013}, finite influent function \citep{hampel:2011} and asymptotic normality \citep{maronna:1981,lehmann:2006,el:2013}. Recently, in the high-dimensional context, regularized M-estimators have received a lot of attentions. \cite{lambert:2011} proposed a robust variable selection method by combing Huber's loss and adaptive lasso penalty.  \cite{li:2011} show the nonconcave penalized M-estimation method can perform parameter estimation and variable selection simultaneously.
Welsch's exponential squared loss combined with adaptive lasso penalty is used by \cite{wang:2013} to construct a robust estimator for sparse estimation and variable selection. \cite{chang:2018} proposed a robust estimator by combining the Tukey's biweight loss with adaptive lasso penalty. \cite{loh:2015} proved that under mild conditions,  any stationary point of the non-convex objective function will close to the underlying true parameters. However, those statistical works did not discuss the computational tractability of the M-estimators even though many of these loss functions are non-convex.

During the last several years, non-convex optimization has attracted fast growing interests due to its ubiquitous applications in machine learning and in particular deep learning, such as dictionary learning \citep{mairal:2009}, phase retrieval \citep{candes:2015}, orthogonal tensor decomposition \citep{anandkumar:2014} and training deep neural networks \citep{bengio:2009}.
It is well known that there is no efficient algorithm  that can guarantee to find the global optimal solution for  general non-convex optimization.

Fortunately, in the context of estimating non-convex M-estimators for high-dimensional linear regression ({\em without outliers}), under some mild statistical assumptions, \cite{loh:2017} establishes the uniqueness of the stationary point of the non-convex M-estimator when using some non-convex bounded regularizers instead of $\ell_1$ regularizer. By investigating the uniform convergence of gradient and Hessian of the empirical risk, \citet*{mei:2016} prove that with a high probability, there exists one unique stationary point of the regularized empirical risk function with $\ell_1$ regularizer. Thus regardless of the initial points, many computational efficient algorithm such as gradient descent or proximal gradient descent algorithm could be applied and are guaranteed to converge to the global optimizer, which implies the high tractability of the M-estimator. However, their analysis is restricted to the standard linear regression setting without outliers. In particular, they assume the distribution of the noise terms in the linear regression model should have some desirable properties such as zero mean, sub-gaussian and independent of feature vector $x$, which might not hold when the data are contaminated with outliers.  To the best of our knowledge, no research has been done on analyzing the computational tractability properties of the non-convex M-estimators when data are contaminated by arbitrary outliers, although the very reason why M-estimators are proposed is to handle outliers in linear regression in the robust statistics literature. Our research is the first to fill the significant gap on the tractability of non-convex M-estimators. We prove that under mild assumptions, many M-estimators can tolerate a small amount of arbitrary outliers in the sense of keeping the tractability, even if the loss functions are non-convex.

\textbf{Notations.}
 Given $\mu,\nu\in \mathbb{R}^p,$ their standard inner product is defined by $\langle \mu,\nu \rangle=\sum_{i=1}^{p}\mu_i\nu_i.$ The $\ell_p$ norm of a vector $x$ is denoted by $||x||_p.$ The $p$ by $p$ identity matrix is denoted by $I_{p\times p}.$ Given a matrix $M\in \mathbb{R}^{m\times m},$ let $\lambda_{\max}(M), \lambda_{\min}(M)$ denote the largest and the smallest eigenvalue of $M$,  respectively. The operator norm of $M$ is denoted by $||M||_{op},$ which is equal to $\max(\lambda_{\max}(M),-\lambda_{\min}(M))$ when $M\in \mathbb{R}^{m\times m}.$  Let $B_q^p(a,r)=\{x\in \mathbb{R}^p: ||x-a||_q\le r\}$, be the $\ell_q$ ball in the $\mathbb{R}^p$ space with  center $a$ and radius $r.$ Given a random variable $X$ with probability density function $f,$ we denote the corresponding expectation by $\vv E_f.$ We will often omit the density function subscript $f$ when it is clear from the context, the expectation is taken for all variables.

\textbf{Organization.} The rest of this article is organized as follows. In Section 2, we present the theorems about the robustness and tractability of general M-estimators under the low-dimensional setup when dimension $p$ is fixed and less than $n.$ Then in Section 3, we consider the penalized M-estimator with $\ell_1$ regularizer in the high-dimensional regression when $p\gg n.$ The $\ell_2$ error bounds of the estimation and the scenario when the M-estimator has nice tractability are provided. In Section 4, we discuss two special families of robust estimator constructed by Huber's and Welsch's exponential loss as examples to illustrate our general theorems of robustness and tractability of M-estimators. Simulation results are presented in Section 5 and a case study is shown in Section 6 to illustrate the robustness and tractability properties when the data are contaminated by outliers.  Concluding remarks are given in Section 7. We relegate all proofs to the Appendix due to space limits.

\section{M-estimators in the low-dimensional regime}\label{sec.M-est}
In this section, we investigate two key properties of M-estimators, namely {\em robustness} and {\em tractability}, in the setting of linear regression with arbitrary outliers in the low-dimensional regime where the dimension $p$ is fixed and smaller than the number of samples $n$. In terms of robustness, we  show that under some mild conditions, any stationary point of the objective function in (\ref{form}) will be well bounded in a neighborhood of the true parameter $\theta_0.$ Moreover, the  neighborhood shrinks when the proportion of outliers decreases.  In terms of tractability, we  show that when the proportion of outliers is small and the sample size is large, with  a high probability, there is a {\em unique stationary point} of the empirical risk function, which is the global optimum (and hence the corresponding M-estimator). Consequently, many first order methods are guaranteed to converge to the global optimum, irrespective of initialization.

Before presenting our main theorems, we make the following mild
assumptions on the loss function $\rho,$ the explanatory or feature vectors $x_i$, and the idealized noise distribution $f_0.$   We define the score function $\psi(z):=\rho'(z).$
\begin{assumption}\label{assume1}
  \begin{description}
  \item[(a)] The score function $\psi(z)$ is twice differentiable and odd in $z$ with $\psi(z)\ge 0$ for all $z\ge 0.$ Moreover, we assume
  $\max\{||\psi(z)||_{\infty},||\psi'(z)||_{\infty},||\psi''(z)||_{\infty}\}\le L_{\psi}.$
  \item[(b)] The feature vector $x_i$ are i.i.d with zero mean and $\tau^2$-sub-Gaussain, that is $\vv{E}[e^{\langle \lambda, x_i\rangle}]\le \exp(\frac{1}{2} \tau^2 ||\lambda||_2^2)$, for all $\lambda\in \mathbb{R}^p.$
  \item[(c)] The feature vector $x_i$ spans all possible directions in $\mathbb{R}^{p},$ that is $\vv E[x_i x_i^T]\succeq \gamma \tau^2 I_{p\times p}$, for some $0<\gamma\le1.$
  \item[(d)] The idealized noise distribution $f_0(\epsilon)$ is symmetric. Define
  $h(z):= \int_{-\infty}^{\infty}f_0(\epsilon)\psi(z+\epsilon)d \epsilon$ and $h(z)$ satisfies $h(z)>0$, for all $z>0$ and $h'(0)>0.$
\end{description}
\end{assumption}
Assumption (a) requires the smoothness of the loss function in the objective function, which is crucial to study the tractability of the estimation problem; Assumption (b) assumes the sub-Gaussian design of the observed feature matrix; Assumption (c) assumes that the covariance matrix of the feature vector is positive semidefinite.
We remark that the condition on $h(z)$ is mild. It is not difficult to show that it is satisfied if the idealized noise distribution $f_0(\epsilon)$ is strictly positive for all $\epsilon$ and decreasing for $\epsilon>0,$ e.g., if $f_{0}=$ pdf of $N(0, \sigma^2).$

Before presenting our main results in this section, we first define the population risk as follows:
\begin{eqnarray} \label{eqn04}
  R(\theta) = \vv E \hat{R}_{n}(\theta) =  \vv{E}[\rho(Y-\langle\theta, X\rangle)].
\end{eqnarray}

The high level idea is to analyze the population risk first, and then we build a link between the population risk and the empirical risk, which solves the original estimation problem. Theorem \ref{thm1} below summarizes the results for the population risk function $R(\theta)$ in (\ref{eqn04}).
\begin{theorem}\label{thm1}
Assume that Assumption \ref{assume1} holds and  the true parameter $\theta_0$ satisfies $||\theta_0||_2\le r/3.$
\begin{description}
\item[(a)] There exists a constant $\eta_0=\frac{\delta}{1-\delta} C_1$  such that any stationary point $\theta^*$ of $R(\theta)$ satisfies $||\theta^*-\theta_0||_2\le \eta_0,$ where $\delta$ is the contamination ratio, and $C_1$ is a positive constant that only depends on $\gamma, r, \tau,\psi(z)$ and the pdf $f_0$, but does not depend on the outlier pdf $g$.
\item[(b)]  When $\delta$ is small, there exist a constant $\eta_1=C_2-C_3\delta>0,$ where $C_2, C_3$ are two positive constants that only depend on $\gamma, r, \tau,\psi(z)$ and the pdf $f_0$ but not depend on the outlier pdf $g,$ such that
\begin{eqnarray}
\lambda_{\min}(\nabla^2 R(\theta)) >0
\end{eqnarray}
for every $\theta$ with $||\theta_0-\theta||_2<\eta_1$.
\item[(c)] There is a unique stationary point of $R(\theta)$ in the ball $B_2^p(0,r)$ as long as $\eta_0<\eta_1$ for a given contamination ratio $\delta.$
 \end{description}
\end{theorem}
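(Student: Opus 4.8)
The plan is to deduce part (c) directly from parts (a) and (b), which together pin down both the \emph{location} and the \emph{local geometry} of every stationary point. Part (a) guarantees that every stationary point $\theta^*$ of $R$ lies in the closed ball $\bar B := B_2^p(\theta_0,\eta_0)$, while part (b) guarantees that $\nabla^2 R(\theta)\succ 0$ throughout the open ball $B_2^p(\theta_0,\eta_1)$. Under the hypothesis $\eta_0<\eta_1$ we have $\bar B\subset B_2^p(\theta_0,\eta_1)$, so that \emph{all} stationary points are confined to a region on which $R$ is strictly convex. This single inclusion drives the entire argument, and the bulk of the analytic work has already been carried out in establishing (a) and (b).

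First I would establish uniqueness. Since $B_2^p(\theta_0,\eta_1)$ is convex and $\nabla^2 R\succ 0$ on it, $R$ is strictly convex there, hence its gradient is strictly monotone: $\langle \nabla R(\theta_1)-\nabla R(\theta_2),\theta_1-\theta_2\rangle>0$ for any distinct $\theta_1,\theta_2$ in this ball. If two distinct stationary points $\theta_1^*\neq\theta_2^*$ existed, part (a) would place both in $\bar B\subset B_2^p(\theta_0,\eta_1)$; but then $\nabla R(\theta_1^*)=\nabla R(\theta_2^*)=0$ would force the left-hand side above to vanish, a contradiction. Hence there is at most one stationary point in $B_2^p(0,r)$.

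Next I would argue existence. Because $R$ is continuous and $B_2^p(0,r)$ is compact, $R$ attains its minimum over the feasible set at some $\theta^*$, which is necessarily a stationary point of the constrained problem. Part (a) then places $\theta^*$ inside $\bar B$; since $\|\theta_0\|_2\le r/3$ and $\eta_0$ is small in the relevant small-$\delta$ regime, $\bar B$ sits strictly inside $B_2^p(0,r)$, so the norm constraint is inactive at $\theta^*$ and $\nabla R(\theta^*)=0$. Combining this with the uniqueness just shown yields exactly one stationary point, and the strict convexity at that point additionally identifies it as the global minimizer of $R$ over $B_2^p(0,r)$.

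The calculations are light here, as (a) and (b) supply all the quantitative content; the one point requiring genuine care is the interiority of the minimizer, which is what lets the KKT stationarity of the constrained optimum collapse to the vanishing-gradient condition used in the uniqueness step. I would make this explicit by verifying that the nested inclusions $\bar B\subset B_2^p(\theta_0,\eta_1)$ and $\bar B\subset \mathrm{int}\,B_2^p(0,r)$ hold simultaneously, the latter following from $\|\theta^*\|_2\le\|\theta^*-\theta_0\|_2+\|\theta_0\|_2\le \eta_0+r/3<r$ in the regime where $\eta_0<\eta_1$ is small. With both inclusions in hand, the uniqueness and existence arguments apply to the same interior critical point, completing the proof.
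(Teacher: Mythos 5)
Your argument for part (c) is sound and essentially matches the paper's: the paper likewise takes the global minimizer of the continuous function $R$ on the compact ball $B_2^p(0,r)$, locates it in $B_2^p(\theta_0,\eta_0)$ via part (a), and invokes the positive-definiteness of $\nabla^2 R$ on $B_2^p(\theta_0,\eta_1)$ to conclude uniqueness when $\eta_0<\eta_1$; your strict-monotonicity-of-the-gradient formulation of the uniqueness step is a correct variant, and your explicit check that the $\ell_2$ constraint is inactive is actually more careful than the paper's treatment (though note that $\eta_0<\eta_1$ alone does not force $\eta_0+r/3<r$; you need $\eta_0<2r/3$, which you should either assume or extract from the smallness of $\delta$, since $\eta_0=\frac{\delta}{1-\delta}C_1$).

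The genuine gap is that you have proved only part (c). The statement is the whole of Theorem \ref{thm1}, and its substantive content lies in parts (a) and (b), which you treat as given hypotheses. Part (a) requires showing $\langle\theta-\theta_0,\nabla R(\theta)\rangle>0$ outside a ball of radius $\eta_0=\frac{\delta}{1-\delta}C_1$: the paper does this by splitting the expectation according to the gross-error mixture $\epsilon\sim(1-\delta)f_0+\delta g$, lower-bounding the inlier term through the function $H(s)=\inf_{0\le z\le s}h(z)/z$ together with a truncation argument using the sub-Gaussian second and fourth moment bounds of $\langle\theta-\theta_0,X\rangle$ (with the specific choice $\tilde s=\frac{8\tau r}{3}\sqrt{c_2/\gamma}$), and bounding the arbitrary outlier term by $\delta L_\psi\tau\|\theta-\theta_0\|_2$ using only $\|\psi\|_\infty\le L_\psi$. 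Part (b) requires the quantitative Hessian estimates $\lambda_{\min}(\nabla^2R(\theta_0))\ge\bigl((1-\delta)h'(0)\gamma-\delta L_\psi\bigr)\tau^2$ and $\|\nabla^2R(\theta)-\nabla^2R(\theta_0)\|_{op}\le L_\psi\sqrt{c_2}\,\tau^3\|\theta-\theta_0\|_2$ (via the bound on $\psi''$), which together yield $\eta_1=C_2-C_3\delta$ of the stated linear-in-$\delta$ form. None of this analysis appears in your proposal, and it is precisely where the dependence of $\eta_0,\eta_1$ on $\delta$, $\gamma$, $\tau$, $r$, $\psi$, and $f_0$ — and the crucial independence from the outlier density $g$ — is established. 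Without it, the proposal establishes the easy implication (a)\,$\wedge$\,(b)\,$\Rightarrow$\,(c) but not the theorem.
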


It is useful to add some remarks for better understanding Theorem \ref{thm1}. First, recall that the noise term   $\epsilon_i$  follows the gross error model: $\epsilon_i\sim (1-\delta)f_0+\delta g,$ where the outlier pdf $g$ may also depend on $x_i.$ While the true parameter $\theta_0$ may no longer be the stationary point of the population risk function $R(\theta),$  Theorem \ref{thm1} implies that the stationary points of $R(\theta)$ will always bounded in a neighborhood of the true parameter $\theta_0$ when the percentage of contamination $\delta$ is small. This indicates the robustness of M-estimators in the population case.

Second, Theorem \ref{thm1} asserts that when there are no outliers, i.e., $\delta=0,$ the stationary point is indeed the true parameter $\theta_0.$ In addition, since the constant $\eta_0$ in (a) is an increasing function of $\delta$ whereas the constant $\eta_1$ in (b) is a decreasing function of $\delta,$ stationary points of $R(\theta)$ may disperse from the true parameter $\theta_0$ and the strongly convex region around $\theta_0$ will be decreasing, as the contamination ratio $\delta$ is increasing. This indicates the difficulty of optimization for large contamination ratio cases.

Third, part (c) is a direct result from part (a) and (b).  Note that $\eta_0(\delta=0)=0<\eta_1(\delta=0)=C_2,$ thus there exists a positive $\delta^*,$ such that $\eta_0<\eta_1$ for any $\delta<\delta^*.$ A simple lower bound on $\delta^{*}$ is $C_3 /(C_1+ C_2+C_3),$ since $C_1 \delta < (1- \delta)(C_2 - C_3 \delta)$ whenever $0 \le \delta \le C_3 /(C_1+ C_2+C_3).$

Our next step is to link the empirical risk function (and the corresponding M-estimator) with the population version. To this end, we need the following lemma, which shows the global uniform convergence theorem of the sample gradient and Hessian.
\begin{lemma}\label{lemma1}
Under Assumption \ref{assume1}, for any $\pi>0,$ there exists a constant $C_{\pi}$ depending on $\pi,\gamma,r,\tau, \psi(z), h(z)$ but independent of $p,n, \delta$ and $g,$ such that for any $\delta\ge 0,$ the following hold:
\begin{description}
  \item[(a)] The sample gradient converges uniformly to the population gradient in Euclidean norm, i.e., if $n\ge C_{\pi}p\log n,$ we have
  \begin{eqnarray}
\vv P\left(\underset{\theta\in B_2^{p}(0,r)}{\sup} ||\nabla \hat{R}_{n}(\theta) -\nabla R(\theta)||_2\le \tau \sqrt{\frac{C_{\pi}p\log n}{n}}\right)\ge 1-\pi.
  \end{eqnarray}
  \item[(b)] The sample Hessian converges uniformly to the population Hessian in operator norm, i.e., if $n\ge C_{\pi}p\log n,$ we have
  \begin{eqnarray}
\vv P\left(\underset{\theta\in B_2^{p}(0,r)}{\sup} ||\nabla^2 \hat{R}_{n}(\theta) -\nabla^2 R(\theta)||_{op}\le \tau^2 \sqrt{\frac{C_{\pi}p\log n}{n}}\right)\ge 1-\pi.
  \end{eqnarray}
\end{description}
\end{lemma}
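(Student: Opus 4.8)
The plan is to establish both statements by the standard three-step recipe for uniform convergence of empirical processes: pointwise concentration, a covering (net) argument over the parameter ball, and a Lipschitz interpolation between net points. First I would record the explicit forms
\[
\nabla \hat{R}_n(\theta) = -\frac{1}{n}\sum_{i=1}^n \psi(y_i - \langle\theta, x_i\rangle)\, x_i, \qquad \nabla^2 \hat{R}_n(\theta) = \frac{1}{n}\sum_{i=1}^n \psi'(y_i - \langle\theta, x_i\rangle)\, x_i x_i^T,
\]
with $\nabla R$ and $\nabla^2 R$ the corresponding expectations. The crucial structural observation, which explains why the constant $C_{\pi}$ can be taken independent of $\delta$ and $g$, is that $\psi$ and $\psi'$ are uniformly bounded by $L_{\psi}$ under Assumption \ref{assume1}(a). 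Thus every summand is a bounded function of the residual times a sub-Gaussian factor in $x_i$, and the distribution of the residual — through which $\delta$ and $g$ enter — never affects the tail behaviour of the summands. This is the precise mechanism by which boundedness of the score function decouples the concentration rate from the outlier model.

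For part (a), I would fix $\theta$ and a unit direction $u$. The scalar $\psi(y_i - \langle\theta,x_i\rangle)\langle x_i, u\rangle$ is a product of a quantity bounded by $L_{\psi}$ and the $\tau^2$-sub-Gaussian variable $\langle x_i, u\rangle$, hence is itself sub-Gaussian with parameter $O(L_{\psi}\tau)$; centring and averaging over $n$ i.i.d.\ samples, a Hoeffding bound for sub-Gaussians controls $\langle \nabla\hat{R}_n(\theta) - \nabla R(\theta), u\rangle$ at scale $\tau/\sqrt{n}$. I then introduce an $\epsilon$-net $N_{\theta}$ of $B_2^p(0,r)$, of cardinality at most $(1+2r/\epsilon)^p$, and a $1/2$-net $N_u$ of the sphere $S^{p-1}$, of cardinality at most $5^p$, and take a union bound over $N_{\theta}\times N_u$; the standard inequality $\|v\|_2 \le 2\max_{u\in N_u}\langle v, u\rangle$ converts the directional bounds into a norm bound at each net point. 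The exponent $p\log(1/\epsilon)$ contributed by the net cardinality is exactly what produces the factor $p$ inside the rate, and matching it against the sub-Gaussian tail forces the scale $\tau\sqrt{C_{\pi} p \log n / n}$.

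To pass from the net to the whole ball, I would bound the Lipschitz constant of $\theta \mapsto \nabla\hat{R}_n(\theta)$: since $|\psi'|\le L_{\psi}$, one has $\|\nabla\hat{R}_n(\theta_1) - \nabla\hat{R}_n(\theta_2)\|_2 \le L_{\psi}\big(\tfrac{1}{n}\sum_i \|x_i\|_2^2\big)\|\theta_1 - \theta_2\|_2$, and $\frac{1}{n}\sum_i\|x_i\|_2^2$ concentrates around $\mathbb{E}\|x\|_2^2 = O(p\tau^2)$ with high probability. Choosing $\epsilon$ polynomially small in $n$ makes the interpolation error negligible relative to the target rate, completing (a). Part (b) follows the same architecture, but the summands $\psi'(y_i-\langle\theta,x_i\rangle)\langle x_i,u\rangle^2$ that control the quadratic form $u^T(\nabla^2\hat{R}_n - \nabla^2 R)u$ are now sub-exponential rather than sub-Gaussian, so I would replace the Hoeffding step by a Bernstein inequality; the operator-norm supremum is handled by the same $\epsilon$-net of $S^{p-1}$ via $\|M\|_{op}\le 2\max_{u\in N_u}|u^T M u|$, and the relevant Lipschitz constant now involves $\frac{1}{n}\sum_i\|x_i\|_2^3$, again controlled by sub-Gaussian moment bounds.

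The main obstacle is the simultaneous management of the random Lipschitz constants and the sub-exponential concentration in part (b): one must first establish high-probability control of the auxiliary averages $\frac{1}{n}\sum_i\|x_i\|_2^k$ (for $k=2,3$) on a favourable event, and then carefully balance the net cardinality $\exp(O(p\log(1/\epsilon)))$ against the Bernstein tail so that the $\sqrt{p\log n/n}$ rate survives the union bound. The extra $\log n$ factor (rather than a bare $p/n$) is precisely the slack absorbed by taking $\epsilon$ of order $1/\mathrm{poly}(n)$ in the net, which is why the sample-size requirement reads $n\ge C_{\pi} p\log n$.
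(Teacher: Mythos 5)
Your proposal is correct, but it takes a different route from the paper: you prove the uniform convergence from scratch, whereas the paper's proof is a reduction. The paper simply verifies that Assumptions 1--3 of \cite{mei:2016} hold under the gross error model --- the directional gradient $\psi(Y-\langle X,\theta\rangle)\langle X,\nu\rangle$ is sub-Gaussian because $|\psi|\le L_{\psi}$ multiplies a $\tau^2$-sub-Gaussian factor, the directional Hessian $\psi'(Y-\langle X,\theta\rangle)\langle X,\nu\rangle^2$ is sub-exponential because $|\psi'|\le L_{\psi}$, and the quantities $H=\|\nabla^2 R(\theta_0)\|_{op}\le L_{\psi}\tau^2$ and $J^*\le L_{\psi}(p\tau^2)^{3/2}$ satisfy the required polynomial bounds --- and then invokes Theorem 1 of that reference. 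Your three-step recipe (pointwise sub-Gaussian/Bernstein concentration, $\epsilon$-nets over $B_2^p(0,r)$ and the sphere, Lipschitz interpolation with random Lipschitz constants $\frac{1}{n}\sum_i\|x_i\|_2^k$) is essentially the machinery inside the cited theorem, so you are re-deriving rather than citing; what your version buys is self-containedness and an explicit accounting of where the $p\log n$ rate and the $n\ge C_{\pi}p\log n$ requirement come from, while the paper's version buys brevity and, importantly, isolates the one substantive point of the lemma: since $\delta$ and $g$ enter the model only through the residual, and the residual enters the summands only through the uniformly bounded $\psi$ and $\psi'$, the sub-Gaussian and sub-exponential parameters --- hence $C_{\pi}$ --- are independent of $\delta$ and $g$, even when the outlier noise depends on $x_i$. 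You identified exactly this decoupling mechanism, and your argument indeed never uses independence of $\epsilon_i$ from $x_i$, so it is valid under the contaminated model. One small repair: for the operator-norm step in part (b), a $1/2$-net does not suffice, since for symmetric $M$ the standard bound is $\|M\|_{op}\le(1-2\epsilon)^{-1}\max_{u\in N_{\epsilon}}|u^{T}Mu|$, which degenerates at $\epsilon=1/2$; take a $1/4$-net to get the factor $2$ you quote (this only changes the net cardinality from $5^p$ to $9^p$ and is absorbed into $C_{\pi}$).
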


%

We are now ready to present our main result about M-estimators by investigating the empirical risk function $\hat{R}_n(\theta).$
\begin{theorem}\label{thm2}
Assume  Assumption \ref{assume1} holds and $||\theta_0||_2\le r/3.$ Let us use the same notation $\eta_0$ and $\eta_1$ as in Theorem \ref{thm1}. Then for any $\pi>0$, there exist constant $C_\pi$ depends on $\pi,\gamma,r,\tau,\psi,f_0$ but independent of $n,p,\delta$ and $g$ ,  such that as $n\ge C_{\pi} p \log n,$  the following statements hold with probability at least $1-\pi:$
\begin{description}
  \item[(a)] for all $||\theta-\theta_0||_2>2\eta_0,$
  \begin{eqnarray}
  \langle \theta-\theta_0, \nabla \widehat{R}_n(\theta)\rangle> 0.
  \end{eqnarray}
  \item[(b)] for all $||\theta-\theta_0||_2\le \eta_1,$
  \begin{eqnarray}
  \lambda_{\min}(\nabla^2 \widehat{R}_n(\theta))>0.
  \end{eqnarray}
  \end{description}
   Thus, as long as $2\eta_0<\eta_1,$  $\widehat{R}_n(\theta)$ has a unique stationary point, which lies in the ball $B^{p}(0,r).$ This is the unique global optimal solution of (\ref{form}), and denote this unique stationary point by $\widehat{\theta}_n$.
  \begin{description}
  \item[(c)] There exists a positive constant $\kappa$ that depends on $\pi, \gamma, r,\psi,\delta,f_0$ but independent of $n,p$ and $g$, such that
  \begin{eqnarray}\label{eq11}
||\widehat{\theta}_n-\theta_0||_2\le \eta_0+\frac{4\tau}{\kappa}\sqrt{\frac{C_{\pi}p\log n}{n}}.
\end{eqnarray}
\end{description}
\end{theorem}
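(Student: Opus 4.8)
The plan is to derive Theorem~\ref{thm2} by transporting the population-level facts of Theorem~\ref{thm1} to the empirical risk through the uniform convergence bounds of Lemma~\ref{lemma1}. Parts (a) and (b) are the empirical analogues of, respectively, the directional monotonicity of $\nabla R$ away from $\theta_0$ (which underlies Theorem~\ref{thm1}(a)) and the local strong convexity of $R$ (Theorem~\ref{thm1}(b)); the enlargement of the radius from $\eta_0$ to $2\eta_0$ in (a) is precisely the buffer needed to absorb the $O(\tau\sqrt{p\log n/n})$ gradient perturbation. Throughout I write $u=\theta-\theta_0$, $\psi=\rho'$, and condition on the event of probability at least $1-\pi$ on which both bounds of Lemma~\ref{lemma1} hold. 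I expect the crux of the whole argument to be the population lower bound feeding part (a).

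For part (a), I would first record the exact population directional derivative. Since $\nabla R(\theta)=-\E[\psi(Y-\langle\theta,X\rangle)X]$ and $Y-\langle\theta,X\rangle=\epsilon-\langle u,X\rangle$, writing $w=\langle u,X\rangle$ gives $\langle u,\nabla R(\theta)\rangle=-\E[w\,\psi(\epsilon-w)]$. Splitting the noise as $(1-\delta)f_0+\delta g$ and using $\epsilon\perp X$ under $f_0$, the clean part equals $(1-\delta)\E_X[w\,h(w)]$, because $\int f_0(\epsilon)\psi(\epsilon-w)\,d\epsilon=h(-w)=-h(w)$ ($h$ is odd, by symmetry of $f_0$ and oddness of $\psi$). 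By Assumption~\ref{assume1}(d), $w\,h(w)>0$ for $w\neq0$, and Assumptions~\ref{assume1}(c)--(d) let me lower-bound $\E_X[w\,h(w)]$ by a strictly positive, increasing function of $\|u\|_2$ (of order $h'(0)\gamma\tau^2\|u\|_2^2$ in the local regime). The outlier part is controlled in absolute value by $\delta L_\psi\,\E|w|\le \delta L_\psi\tau\|u\|_2$, using $\|\psi\|_\infty\le L_\psi$ and sub-Gaussianity. Hence $\langle u,\nabla R(\theta)\rangle\ge(1-\delta)\E_X[w\,h(w)]-\delta L_\psi\tau\|u\|_2$, which is strictly positive once $\|u\|_2>\eta_0=\tfrac{\delta}{1-\delta}C_1$, and a short computation shows its per-unit-length value at $\|u\|_2=2\eta_0$ is bounded below by a positive constant. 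Adding the Cauchy--Schwarz perturbation $|\langle u,\nabla\widehat{R}_n(\theta)-\nabla R(\theta)\rangle|\le\|u\|_2\,\tau\sqrt{C_\pi p\log n/n}$ from Lemma~\ref{lemma1}(a), for $n\ge C_\pi p\log n$ large enough this perturbation is dominated by the population lower bound on all of $\{2\eta_0<\|u\|_2\le 2r\}$, giving $\langle u,\nabla\widehat{R}_n(\theta)\rangle>0$. The delicate point is that for redescending scores such as Welsch's loss, $h(w)$ may decay for large $|w|$, so obtaining a uniform positive lower bound for $\E_X[w\,h(w)]$ on the whole annulus, and not merely for small $\|u\|_2$, is exactly where the boundedness imposed by the constraint $\|\theta\|_2\le r$ is indispensable.

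Part (b) is more direct. Theorem~\ref{thm1}(b) yields a strictly positive minimum eigenvalue of $\nabla^2R(\theta)$ on $\{\|u\|_2\le\eta_1\}$; by continuity and compactness this minimum is some $\kappa>0$. Weyl's inequality with Lemma~\ref{lemma1}(b) then gives $\lambda_{\min}(\nabla^2\widehat{R}_n(\theta))\ge\kappa-\tau^2\sqrt{C_\pi p\log n/n}>0$ for $n$ large. For uniqueness, part (a) forces every stationary point into $\{\|u\|_2\le 2\eta_0\}$, which under $2\eta_0<\eta_1$ lies inside the region where $\widehat{R}_n$ is strongly convex, so there is at most one critical point there, while existence follows from compactness of $B_2^p(0,r)$. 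The only subtlety is ruling out KKT points on the sphere $\|\theta\|_2=r$: such a point with $\|u\|_2>2\eta_0$ would satisfy $\nabla\widehat{R}_n(\theta)=-\mu\theta$ with $\mu\ge0$, contradicting part (a), since $\langle u,\theta\rangle=r^2-\langle\theta_0,\theta\rangle\ge r^2-\|\theta_0\|_2 r\ge\tfrac23 r^2>0$.

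Finally, for part (c), let $\theta^\ast$ be the unique population stationary point, so $\nabla R(\theta^\ast)=0$ and $\|\theta^\ast-\theta_0\|_2\le\eta_0$ by Theorem~\ref{thm1}(a),(c). Both $\widehat\theta_n$ and $\theta^\ast$ lie in the convex ball $\{\|u\|_2<\eta_1\}$ on which $R$ is $\kappa$-strongly convex, so using $\nabla\widehat{R}_n(\widehat\theta_n)=0$ I get $\kappa\|\widehat\theta_n-\theta^\ast\|_2\le\|\nabla R(\widehat\theta_n)-\nabla R(\theta^\ast)\|_2=\|\nabla R(\widehat\theta_n)-\nabla\widehat{R}_n(\widehat\theta_n)\|_2\le\tau\sqrt{C_\pi p\log n/n}$ by Lemma~\ref{lemma1}(a). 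A triangle inequality with $\|\theta^\ast-\theta_0\|_2\le\eta_0$ then yields (\ref{eq11}), the remaining numerical factor being absorbed into the constant multiplying the perturbation term.
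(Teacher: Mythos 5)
Your proposal is correct and, for parts (a) and (b), follows the paper's own route: you combine the population lower bound $\langle u,\nabla R(\theta)\rangle\ge(1-\delta)\tfrac34 H(\tilde s)\gamma\tau^2\|u\|_2^2-\delta L_\psi\tau\|u\|_2$ from the proof of Theorem~\ref{thm1} with Lemma~\ref{lemma1}, calibrating $n$ so that the gradient perturbation is at most $\delta L_\psi\tau$ per unit length, which is exactly how the paper turns $\eta_0$ into $2\eta_0$; likewise your part (b) is the paper's Weyl-plus-uniform-Hessian-convergence argument with $\kappa-\kappa/2=\kappa/2$ (the paper even has the explicit $\kappa=\tfrac{(1-\delta)h'(0)\gamma-\delta L_\psi}{2}\tau^2$, so your compactness appeal is not needed). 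Where you genuinely differ is part (c): the paper Taylor-expands the \emph{empirical} risk at the population stationary point $\theta^\ast$, uses global optimality $\widehat R_n(\widehat\theta_n)\le\widehat R_n(\theta^\ast)$ together with empirical strong convexity $\kappa/2$ to get $\|\widehat\theta_n-\theta^\ast\|_2\le\tfrac4\kappa\|\nabla\widehat R_n(\theta^\ast)\|_2$, whereas you apply \emph{population} strong convexity to $\nabla R$ between $\theta^\ast$ and $\widehat\theta_n$ and use the first-order condition $\nabla\widehat R_n(\widehat\theta_n)=0$, obtaining the same rate with constant $1/\kappa$ instead of $4/\kappa$; both are valid, yours is slightly cleaner but relies on $\widehat\theta_n$ being an interior stationary point, which you justify via your explicit exclusion of KKT points on the sphere $\|\theta\|_2=r$ (using $\langle\theta-\theta_0,\theta\rangle\ge\tfrac23 r^2>0$) --- a detail the paper's proof leaves implicit, and a worthwhile addition. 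One caveat you share with the paper: making the statistical error dominated by $\delta L_\psi\tau$ implicitly requires the sample-size threshold to depend on $\delta$, which sits slightly uneasily with the theorem's claim that $C_\pi$ is independent of $\delta$; this is an artifact of the original proof, not a defect introduced by your argument.
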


A few remarks are in order. First, since $\eta_0$ is independent of $n,p$ and $g,$  Theorem \ref{thm2}(a) asserts that the M-estimator which minimizes $\widehat{R}_n(\theta)$ is always bounded in the ball $B_2^p(\theta_0,2\eta_0),$ regardless of $g$ (and hence the outliers observed). This indicates the robustness of the M-estimator, i.e., the estimates are not severely skewed by a small amount of ``bad'' outliers.
Next, when the contamination ratio $\delta$ is small such that $2\eta_0<\eta_1,$ there is a unique stationary point of $\widehat{R}_n(\theta).$ Therefore, although the original optimization problem (\ref{form}) is non-convex and the sample contains some arbitrary outliers, the optimal solution of $\widehat{R}_n(\theta)$ can be computed efficiently via most off-the-shelf first-order algorithms such as gradient descent or stochastic gradient descent. This indicates the tractability of the M-estimator. Interestingly,  as in the population risk case, the tractability is closely related to the amount of outliers~--~the problem is easier to optimize when the data contains fewer outliers. Finally, when the number of samples $n\gg p\log n,$  the estimation error bound $\eta_0$ is as the order of $O(\delta+\sqrt{\frac{p\log n}{n}}),$ which nearly achieves the minimax lower bound of $O(\delta+\sqrt{\frac{p}{n}})$ in \cite{chen:2016}.

\section{Penalized M-estimator in the high-dimensional regime}\label{third.M-est}
In this section, we investigate the tractability and the robustness of the penalized M-estimator in the high-dimension region where the dimension of parameter $p$ is much greater than the number of samples $n.$ Specifically, we consider the same data generation model $y_i=\langle\theta_0, x_i\rangle+\epsilon_i,$ where $y_i\in \mathbb{R}, x_i \in \mathbb{R}^p,$  and the noise term $\epsilon_i$ are from  Huber's gross error model \citep{huber:1964}:  $\epsilon_i\sim (1-\delta)f_0 +\delta g.$ Moreover, we assume $p\gg n$ and the true parameter $\theta_0$ is sparse.

We consider the $\ell_1$-regularized M-estimation under a $\ell_2$-constraint on $\theta$:
\begin{eqnarray}\label{form3}
&&\underset{\theta}{\mbox{Minimize:}}\quad\hat{L}_{n}(\theta):=\frac{1}{n} \sum_{i=1}^{n}\rho(y_i-\langle\theta, x_i\rangle)+\lambda_n||\theta||_1,\\
&&\text{subject to:     }\quad \|\theta\|_2\le r.\nonumber
\end{eqnarray}

Before presenting our main theorem, we need additional assumptions on the feature vector $x.$
\begin{assumption}\label{assume3}
 The feature vector $x$ has a probability density function in $\mathbb{R}^p.$ In addition, there exists constant $M>1$ that is independent of dimension $p$ such that $||x||_{\infty}\le M\tau$ almost sure.
\end{assumption}
The following lemma shows the uniform convergence of gradient and Hessian under the Huber's contamination model in the high-dimensional setting where $p>>n.$

\begin{lemma}\label{lemma3}
Under assumption \ref{assume1} and \ref{assume3}, there exist constants $C_1, C_2,T_0, L_0$ that depend on $r,\tau, \pi,\delta,L_{\psi},$ but independent of $n,p,$ and $g,$ such that the following hold:
\begin{description}
  \item[a] The sample directional gradient converges uniformly to the population directional gradient, along the direction $(\theta-\theta_0).$
  \begin{eqnarray}
  \vv P\left(\underset{\theta\in B_2^p(r)\setminus\{0\}}{\sup}\frac{|\langle \nabla R_n(\theta)-\nabla R(\theta), \theta-\theta_0\rangle|}{||\theta-\theta_0||_1}\le (T_0+L_0\tau)\sqrt{\frac{C_1\log (np)}{n}}\right)\ge 1-\pi.
  \end{eqnarray}
  \item[b] As $n\ge C_2s_0\log(np),$ we have
  \begin{eqnarray*}
  \vv P\left(\underset{\theta\in B_2^p(r)\cap B_2^p(s_0), \nu\in B^p_2(1)\cap B^p_0(s_0)}{\sup}|\langle \nu, \left(\nabla^2 R_n(\theta)-\nabla^2 R(\theta)\right)\nu\rangle|\le\tau^2 \sqrt{\frac{C_2s_0\log(np)}{n}}\right)\ge 1-\pi.
  \end{eqnarray*}
\end{description}
\end{lemma}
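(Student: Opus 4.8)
The plan is to read both parts of Lemma~\ref{lemma3} as uniform deviation bounds for \emph{uniformly bounded} empirical processes, exploiting that Assumption~\ref{assume1}(a) forces $\psi,\psi',\psi''$ to be bounded by $L_\psi$ and Assumption~\ref{assume3} forces $\|x\|_\infty\le M\tau$ almost surely. Because every summand is then bounded regardless of whether $\epsilon_i$ is drawn from $f_0$ or from the outlier law $g$, the concentration is insensitive to $g$, which is exactly why the resulting constants will not depend on $g$. For part~(a) I would write $\nabla R_n(\theta)=-\frac1n\sum_{i=1}^n\psi(y_i-\langle\theta,x_i\rangle)x_i$, set $\Delta=\theta-\theta_0$, $t=\|\Delta\|_1$ and $u=\Delta/t\in\partial B_1^p(1)$, and use $y_i-\langle\theta_0,x_i\rangle=\epsilon_i$ to rewrite the normalized directional gap as $\frac{\langle\nabla R_n(\theta)-\nabla R(\theta),\Delta\rangle}{\|\Delta\|_1}=-\frac1n\sum_i\big[\psi(\epsilon_i-t\langle u,x_i\rangle)\langle u,x_i\rangle-\mathbf{E}(\cdots)\big]$. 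Since $|\langle u,x_i\rangle|\le\|u\|_1\|x_i\|_\infty\le M\tau$, each summand is bounded by $L_\psi M\tau$, so the object to control is the supremum of a centered, uniformly bounded process indexed by $(u,t)$.

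Next I would split $\psi(\epsilon_i-ts_i)s_i=\psi(\epsilon_i)s_i+[\psi(\epsilon_i-ts_i)-\psi(\epsilon_i)]s_i$ with $s_i=\langle u,x_i\rangle$. The first piece is \emph{linear} in $u$ and equals $\langle u,\frac1n\sum_i(\psi(\epsilon_i)x_i-\mathbf{E})\rangle$; taking the supremum over $u\in B_1^p(1)$ turns it into $\|\frac1n\sum_i\psi(\epsilon_i)x_i-\mathbf{E}\|_\infty$, which a coordinatewise Hoeffding bound together with a union bound over the $p$ coordinates controls at order $L_\psi M\tau\sqrt{\log p/n}$; this $\ell_1$--$\ell_\infty$ duality is what keeps the dimension dependence at $\log p$ rather than $p$. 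The second, remainder piece is where boundedness must be combined with the $\ell_2$ constraint. By the mean value theorem it equals $-t\psi'(\xi_i)s_i^2$, so its averaged magnitude is at most $L_\psi\,t\cdot\frac1n\sum_i\langle u,x_i\rangle^2\approx L_\psi\,t\,\tau^2\|u\|_2^2=L_\psi\tau^2\|u\|_2\,(t\|u\|_2)$. The feasibility constraints $\|\theta\|_2\le r$ and $\|\theta_0\|_2\le r/3$ give $t\|u\|_2=\|\Delta\|_2\le 4r/3$ and $\|u\|_2\le\|u\|_1=1$, so the remainder's mean stays bounded uniformly in $p$; its centered fluctuation I would control by symmetrization and contraction (or directly by a Bernstein bound on the bilinear form $\frac1n\sum_i\psi'(\xi_i)\langle\Delta,x_i\rangle\langle u,x_i\rangle$), the effective shift $t\langle u,x_i\rangle=\langle\Delta,x_i\rangle$ being kept in check by $\|\Delta\|_2\le 4r/3$ rather than by $t$ alone. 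Finally I would discretize the scalar $t$ over its bounded effective range---a net of cardinality polynomial in $n$ suffices, since the radial derivative $-\frac1n\sum_i\psi'(\epsilon_i-ts_i)s_i^2$ is bounded by order $L_\psi\tau^2$ along the constraint---contributing the $\log n$ factor, so that the union of the $t$-net with the coordinate union bound produces $\log(np)$. Collecting terms, $T_0$ absorbs the $\psi$-level constant and $L_0\tau$ the $M\tau$-scaled contributions.

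For part~(b) I would write $\langle\nu,(\nabla^2 R_n(\theta)-\nabla^2 R(\theta))\nu\rangle=\frac1n\sum_i[\psi'(y_i-\langle\theta,x_i\rangle)\langle\nu,x_i\rangle^2-\mathbf{E}(\cdots)]$. For $\nu\in B_2^p(1)\cap B_0^p(s_0)$ one has $|\langle\nu,x_i\rangle|\le\|\nu\|_1 M\tau\le\sqrt{s_0}\,M\tau$, so each summand is bounded by $L_\psi s_0(M\tau)^2$, while its variance is of order $L_\psi^2\,\mathbf{E}\langle\nu,X\rangle^4\lesssim\tau^4$; a Bernstein bound at a single $(\theta,\nu)$ therefore gives a deviation of order $\tau^2\sqrt{\log(1/\pi)/n}$. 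To make this uniform I would fix a support $S$ of size $s_0$ for $\nu$ (and the sparse location of $\theta$), build an $\epsilon$-net of the $s_0$-dimensional unit sphere, of cardinality $(3/\epsilon)^{s_0}$, together with a net for $\theta$, control the oscillation in both arguments through the Lipschitz bounds supplied by $\|\psi'\|_\infty,\|\psi''\|_\infty\le L_\psi$ and $\|x\|_\infty\le M\tau$, and finally union bound over all $\binom{p}{s_0}$ supports. The resulting exponent $\log\binom{p}{s_0}+s_0\log(3/\epsilon)+\log n$ is of order $s_0\log(np)$, which is exactly what the hypothesis $n\ge C_2 s_0\log(np)$ is designed to dominate, yielding the stated rate $\tau^2\sqrt{C_2 s_0\log(np)/n}$.

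The hard part is the remainder term in~(a). A naive reduction to $\sup_\theta\|\nabla R_n(\theta)-\nabla R(\theta)\|_\infty$, or a naive covering of the $\ell_1$ directions, would each cost a spurious factor $\sqrt p$ and destroy the $\sqrt{\log(np)/n}$ rate; indeed, for a generic direction the plain $\ell_\infty$ gradient deviation is of order $\sqrt{p/n}$. The proof must therefore make two mechanisms cooperate simultaneously: keep the $\ell_1$-direction complexity at $\log p$ via contraction and duality rather than covering, and use the $\ell_2$ constraint to bound the effective argument shift $t\|u\|_2=\|\theta-\theta_0\|_2\le 4r/3$, which is what prevents the redescending, non-convex score $\psi$ from inflating the relevant Lipschitz constant with $p$. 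Reconciling these two so that the bound is genuinely $\sqrt{\log(np)/n}$ and holds uniformly over the entire punctured ball $B_2^p(r)\setminus\{0\}$ is the crux; the remaining ingredients are standard Hoeffding/Bernstein concentration combined with covering-number union bounds.
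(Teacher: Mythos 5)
Your route is genuinely different from the paper's, which contains almost no empirical-process work at all: the paper proves Lemma~\ref{lemma3} by verifying Assumptions~4 and~5 of \cite{mei:2016} and then citing their Theorem~3. Concretely, it checks $\|\nabla_\theta\rho(Y-\langle X,\theta\rangle)\|_\infty\le L_\psi M\tau$ (giving $T_0=L_\psi\tau$), and it exhibits the structural representation $\langle\nabla_\theta\rho(Y-\langle X,\theta\rangle),\theta-\theta_0\rangle=h_1(\langle\theta-\theta_0,h_2(Y,X)\rangle,Y,X)$ with $h_2(Y,X)=X$ mean-zero and $\tau^2$-sub-Gaussian and $h_1(t,Y,X)=-\psi(Y-t-\langle X,\theta_0\rangle)t$ Lipschitz in its first argument with constant $(2L_\psi r\tau+L_\psi)M=L_0M$; since these verifications use only $|\psi|,|\psi'|\le L_\psi$ and $\|x\|_\infty\le M\tau$, the constants are automatically independent of $g$ --- the same (correct) observation that opens your proposal. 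Your part~(b) sketch (Bernstein at a fixed pair, $\epsilon$-nets of the $s_0$-dimensional spheres, a union over the $\binom{p}{s_0}$ supports, oscillation control via $\|\psi''\|_\infty$ and $\|x\|_\infty\le M\tau$) is sound and is essentially what happens inside the cited theorem.

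In part~(a), however, your remainder control has a genuine gap. After your split $\psi(\epsilon_i-ts_i)s_i=\psi(\epsilon_i)s_i+\varphi_t(s_i)$ with $\varphi_t(s)=[\psi(\epsilon-ts)-\psi(\epsilon)]s$, one has $\varphi_t'(s)=-t\psi'(\epsilon-ts)s+[\psi(\epsilon-ts)-\psi(\epsilon)]$, so the Lipschitz constant you must feed into the contraction principle is of order $tL_\psi M\tau$, growing linearly in $t=\|\Delta\|_1$, which over $B_2^p(r)$ can be of order $\sqrt{p}\,r$. A fixed-$t$ contraction then yields a deviation of order $t\sqrt{\log p/n}$ rather than $\sqrt{\log p/n}$, and your union bound over a $t$-net cannot repair per-$t$ bounds that are already too large at large $t$. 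The rescue you invoke --- that the shift $t\langle u,x_i\rangle=\langle\Delta,x_i\rangle$ is ``kept in check by $\|\Delta\|_2\le 4r/3$'' --- holds only pointwise in $\Delta$ with high probability; uniformly over the $\ell_2$ ball one only gets $\sup_{\Delta}|\langle\Delta,x_i\rangle|\le(4r/3)\|x_i\|_2$, which is of order $\sqrt{p}$, so it cannot cap the relevant Lipschitz constant, and the same issue defeats your alternative Bernstein bound on $\frac1n\sum_i\psi'(\xi_i)\langle\Delta,x_i\rangle\langle u,x_i\rangle$ (whose index set is the full $\ell_2$ ball, with $\xi_i$ depending on $\theta$). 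The repair is to \emph{not} split off the linear part: contract the unsplit map $s\mapsto h_1(ts)/t$, which is Lipschitz in $s$ \emph{uniformly in} $t$ as soon as $h_1(a)=-\psi(\epsilon-a)a$ is Lipschitz in $a$ on the relevant range --- exactly the structural condition the paper verifies (with constant $L_0M$, using the bound $|t|\le 2rM\tau$ on the argument) before invoking \cite{mei:2016}; this is also where the specific constants $T_0$ and $L_0$ appearing in the lemma's statement come from, constants that your ``collecting terms'' step never actually produces.
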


Now we are ready for our main theorem.
\begin{theorem}\label{thm3}
Assume that Assumption \ref{assume1} and Assumption \ref{assume3} hold and  the true parameter $\theta_0$ satisfies $||\theta_0||_2\le r/3$ and
$||\theta_0||_0\le s_0.$ Then there exist constants $C,C_0,C_1,C_2$ that are dependent on $(\rho, L_{\psi},\tau^2,r,\gamma, \pi)$ but independent on $(\delta,s_0,n,p,M)$ such that as $n\ge C s_0 \log p$ and $\lambda_n= C_0M\sqrt{\frac{\log p}{n}}+\frac{C_1}{\sqrt{s_0}}\delta ,$ the following hold with probability as least $1-\pi:$
 \begin{description}
        \item[(a)] All stationary points of problem (\ref{form3}) are in $B_2^p(\theta_0, \eta_0+\frac{\sqrt{s_0}}{1-\delta}\lambda_nC_2)$
  \item[(b)] As long as $n$ is large enough such that $n\ge C s_0 \log^2 p$ and the contamination ratio $\delta$ is small such that $(\eta_0+\frac{1}{1-\delta}\sqrt{s_0}\lambda_nC_2)\le \eta_1, $ the problem (\ref{form3}) has a unique local stationary point which is also the global minimizer.
\end{description}
\end{theorem}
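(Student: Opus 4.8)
The plan is to follow the template of Theorem \ref{thm2} but to replace the Euclidean gradient/Hessian bounds of Lemma \ref{lemma1} with the cone-restricted bounds of Lemma \ref{lemma3}, and to control the (approximately sparse) error vector $\nu:=\theta^{\ast}-\theta_0$ through the standard $\ell_1$ ``basic inequality'' built on the support $S=\mathrm{supp}(\theta_0)$, $|S|\le s_0$. The population-level facts of Theorem \ref{thm1} (the $O(\delta)$ robustness radius $\eta_0$ and the local Hessian positivity on $B_2^p(\theta_0,\eta_1)$) will supply all the deterministic structure; Lemma \ref{lemma3} bridges to the empirical objective.

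For part (a), I would begin with the first-order condition for the constrained problem (\ref{form3}). Since $\theta_0$ is feasible, any stationary point $\theta^{\ast}$ with subgradient $\hat z\in\partial\|\theta^{\ast}\|_1$ obeys $\langle\nabla\hat R_n(\theta^{\ast})+\lambda_n\hat z,\ \theta_0-\theta^{\ast}\rangle\ge 0$, and using $\langle\hat z,\theta^{\ast}\rangle=\|\theta^{\ast}\|_1$ together with $\langle\hat z,\theta_0\rangle\le\|\theta_0\|_1$ this rearranges to
\begin{equation*}
\langle\nabla\hat R_n(\theta^{\ast}),\ \nu\rangle\ \le\ \lambda_n\bigl(\|\theta_0\|_1-\|\theta^{\ast}\|_1\bigr).
\end{equation*}
I would then pass to the population gradient through Lemma \ref{lemma3}(a), whose directional deviation is at most $(T_0+L_0\tau)\sqrt{C_1\log(np)/n}\,\|\nu\|_1$; choosing the statistical part of $\lambda_n$ to be $C_0M\sqrt{\log p/n}$ (recall $M>1$ and $\log(np)\asymp\log p$ when $p\gg n$) absorbs this into $\tfrac12\lambda_n\|\nu\|_1$. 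The key population input is a restricted-strong-convexity lower bound $\langle\nabla R(\theta^{\ast}),\nu\rangle\ge\kappa(1-\delta)\|\nu\|_2^2-(\text{outlier bias})$, obtained from the Hessian positivity of Theorem \ref{thm1}(b); the deterministic bias in $\nabla R(\theta_0)=-\vv{E}[\psi(\epsilon)X]$ is of order $\delta$, since the $f_0$-component vanishes by oddness of $\psi$, symmetry of $f_0$ and $\vv{E}X=0$, leaving only the $\delta g$-contribution, bounded coordinatewise via $\|x\|_\infty\le M\tau$. Combining these with the cone decomposition $\|\theta_0\|_1-\|\theta^{\ast}\|_1\le\|\nu_S\|_1-\|\nu_{S^c}\|_1$ gives the cone condition $\|\nu_{S^c}\|_1\le 3\|\nu_S\|_1$, hence $\|\nu\|_1\le 4\sqrt{s_0}\,\|\nu\|_2$. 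Feeding this back produces a quadratic inequality $\kappa(1-\delta)\|\nu\|_2^2\le C\sqrt{s_0}\,\lambda_n\|\nu\|_2+(\text{bias})$ whose solution is exactly $\|\nu\|_2\le\eta_0+\tfrac{\sqrt{s_0}}{1-\delta}\lambda_nC_2$: the population robustness of Theorem \ref{thm1}(a) supplies the additive $\eta_0=O(\delta)$ term, while the $\tfrac{C_1}{\sqrt{s_0}}\delta$ piece of $\lambda_n$ carries the residual bias, its $1/\sqrt{s_0}$ scaling being precisely what cancels the $\sqrt{s_0}$ coming from the $\ell_1$-to-$\ell_2$ conversion so that the total outlier bias stays at order $\delta$ rather than $\sqrt{s_0}\,\delta$.

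For part (b), I would localize: under the stated hypothesis the radius $\eta_0+\tfrac{1}{1-\delta}\sqrt{s_0}\lambda_nC_2\le\eta_1$, so by part (a) every stationary point lies in the ball where the population Hessian is positive definite (Theorem \ref{thm1}(b)). I would upgrade this to the empirical objective restricted to approximately sparse directions via Lemma \ref{lemma3}(b): over $s_0$-sparse unit directions the empirical and population Hessians agree up to $\tau^2\sqrt{C_2 s_0\log(np)/n}$, which the stronger requirement $n\ge Cs_0\log^2 p$ renders negligible against $\lambda_{\min}$, yielding restricted strong convexity of $\hat L_n$ on that cone. If two distinct stationary points $\theta_1,\theta_2$ existed, both would sit in this small ball, their difference would satisfy the cone condition, and restricted strong convexity along the connecting segment would contradict the two first-order conditions; uniqueness follows. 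Since the feasible $\ell_2$ ball is compact a global minimizer exists and is itself stationary, so the unique stationary point is the global optimum.

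The step I expect to be the main obstacle is the restricted Hessian control behind part (b): certifying $\lambda_{\min}>0$ for the empirical Hessian uniformly over all $\binom{p}{s_0}$ sparse supports (Lemma \ref{lemma3}(b)) requires a covering/peeling argument over sparse directions, and it is exactly this union bound that inflates the sample size from $n\gtrsim s_0\log p$ in (a) to $n\gtrsim s_0\log^2 p$ in (b). A secondary delicate point is verifying that the difference of two stationary points obeys the cone condition needed to invoke restricted strong convexity, which must be extracted from their respective subgradient conditions rather than assumed.
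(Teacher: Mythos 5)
Your part (a) is essentially the paper's route (its Lemmas \ref{lemma4} and \ref{lemma5}): subgradient stationarity plus the directional uniform convergence of Lemma \ref{lemma3}(a), the cone condition, and the $\|\nu\|_1\le 4\sqrt{s_0}\|\nu\|_2$ conversion, with the $C_1\delta/\sqrt{s_0}$ piece of $\lambda_n$ absorbing the outlier bias. One correction there: the population lower bound cannot be ``obtained from the Hessian positivity of Theorem \ref{thm1}(b)'', since that positivity holds only on $B_2^p(\theta_0,\eta_1)$, while part (a) must control stationary points anywhere in $B_2^p(0,r)$. What is actually needed (and what the paper uses) is the \emph{global} correlation inequality $\langle \theta-\theta_0,\nabla R(\theta)\rangle\ge (1-\delta)\tfrac{3}{4}H(\cdot)\tau^2\gamma\|\theta-\theta_0\|_2^2-\delta L_{\psi}\tau\|\theta-\theta_0\|_2$, established directly from the structure of $\psi$ and $h$ in the proof of Theorem \ref{thm1}(a) (display (\ref{eq3})); your inequality has the right shape but the cited justification would fail outside the small ball.

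The genuine gap is in part (b). Your uniqueness argument needs the empirical Hessian to be positive along the direction $u=(\theta_2-\theta_1)/\|\theta_2-\theta_1\|_2$, but Lemma \ref{lemma3}(b) only controls the Hessian along \emph{sparse} directions, and the difference of two stationary points inherits neither sparsity nor a usable cone condition from the fact that each error $\theta_i-\theta_0$ lies in $\mathbb{A}$: two vectors in the cone can differ by an essentially dense vector, and, contrary to your closing remark, no cone condition on $\theta_1-\theta_2$ can be extracted from the two subgradient conditions. The paper's missing ingredient is its Lemma \ref{lemma6}: the exact KKT identity $(\nabla \hat{R}_n(\hat\theta))_j=\pm\lambda_n$ for all $j\in S(\hat\theta)$, combined with the high-probability bound $\|\nabla\hat{R}_n(\theta_0)\|_\infty<\lambda_n/2$ and a restricted-smoothness estimate, forces every stationary point to have support size $|S(\hat\theta)|\le Cs_0\log p$. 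Only then is $u$ genuinely $r_0$-sparse with $r_0=O(s_0\log p)$, so the restricted Hessian bound applies along the segment and contradicts the two first-order conditions (using monotonicity of the $\ell_1$ subdifferential, as in Lemma \ref{lemma7}). This also corrects your accounting of the sample size: $n\gtrsim s_0\log^2 p$ arises because the Hessian must be controlled at the inflated sparsity level $s_0\log p$ (Lemma \ref{lemma3}(b) with $s=Cs_0\log p$ and $\log(np)\asymp\log p$), not merely from a covering or peeling union bound over $s_0$-sparse supports.
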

The proof of Theorem \ref{thm3} is based on several lemmas, which are  postponed to the appendix. We believe that some of our lemmas are of interest in their own right. Theorem \ref{thm3} implies the estimation error of the penalized M-estimator is bounded as the order of $O(\delta+\sqrt{\frac{s_0\log p}{n}}),$ which achieves the minimax estimation rate \citep{chen:2016}. Moreover, it implies that the penalized M-estimator has good tractability when the percentage of outliers $\delta$ is small.


\section{Example}
In this section, we use some examples to illustrate our general theoretical results about the robustness and tractability of M-estimators. In the first subsection, we consider the low-dimensional regime and study a family of M-estimators with a specific loss function known as Huber's loss \citep{huber:1964}. In the second subsection, we consider the high-dimensional regime and study the penalized M-estimator with Welsch's exponential squared loss \citep{dennis:1978,rey:2012,wang:2013}. In both subsections, we will derive the explicit expression of the two critical radius $\eta_0,$ $\eta_1$ and discuss the robustness and tractability of the corresponding M-estimators.
\subsection{M-estimator via Huber's loss}

In this subsection, we illustrate the general results presented in Section~\ref{sec.M-est} by studying the Huber's loss function \citep{huber:1964}
\begin{eqnarray}\label{huber}
\rho_{\alpha}(t)=\left\{
                    \begin{array}{ll}
\frac{1}{2}t^2, \quad\text{if $|t|\le \alpha$}\\
\alpha(|t|-\alpha/2), \quad \text{if $|t|> \alpha.$}
 \end{array}
                   \right.
 \end{eqnarray}
where $\alpha> 0$ is a tuning parameter. The corresponding M-estimator is obtained by solving the optimization problem
\begin{eqnarray}\label{exp1}
\underset{\theta}{\min}&& \hat{R}_{n}(\theta):=\frac{1}{n} \sum_{i=1}^{n}\rho_{\alpha}(y_i-\langle\theta, x_i\rangle),\\
\text{subject to}&& ||\theta||_2\le r. \nonumber
\end{eqnarray}
First, note the loss function $\rho_{\alpha}(t)$ in (\ref{huber}) is convex. Thus, the corresponding M-estimator should be tractable even though there are some outliers. Second,  when $\alpha$ goes to $0,$  $\rho_{\alpha}(t)$  will converges to $t^2/2.$ Thus, the least square estimator is a special case of the M-estimator obtained from (\ref{exp1}), which is not robust to outliers. Third, for fixed $\alpha>0,$ $\rho_{\alpha}'(t),\rho_{\alpha}''(t)$ are all bounded. Intuitively, this implies that the impact of outlier observations of $y_i$ will be controlled and thus the corresponding statistical procedure will be robust.

We now study the robustness and tractability of the M-estimator of (\ref{exp1}) based on our framework in Theorem \ref{thm2}.
In order to emphasize on the effects of the tuning parameter $\alpha$ and the contamination ratio $\delta$ on the robustness property and tractability property, we consider a simplified assumption on the feature vector $x_i$ and the pdf of idealized residual $f_0.$
\begin{assumption}\label{assume2}
  \begin{description}
  \item[(a)] The feature vector $x_i$ are i.i.d multivariate Gaussian distribution $N(0,\tau^2I_{p\times p}).$
  \item[(b)] The idealized noise pdf $f_0(\epsilon)$ has Gaussian distribution $N(0,\sigma^2).$
  \item[(c)] Assume the true parameter $||\theta_0||_2\le r/3.$
\end{description}
\end{assumption}

\begin{corollary}\label{cor1}
Under Assumption \ref{assume2}, for any $\delta,\alpha\ge 0,$ there exist two constants $\eta_0(\delta,\alpha),\eta_1(\delta,\alpha):$
\begin{eqnarray}
  \eta_0(\delta,\alpha)&=&\frac{\delta}{1-\delta}\frac{4\sqrt{2\pi}\sigma^3}{(\alpha^2+3\sigma^2)\tau}e^{\frac{\alpha^2+22\tau^2r^2}{2\sigma^2}}\\
  \eta_1(\delta,\alpha)&=&+\infty,
  \end{eqnarray}
  such that when the number of data points $n$ is large, with high probability, any stationary points of the empirical risk function $\hat{R}_n(\theta)$ in (\ref{exp1}) belongs  in the ball $B_2^{p}(\theta_0,2\eta_0(\delta,\alpha)).$ Moreover, the empirical risk function $\hat{R}_n(\theta)$ in (\ref{exp1}) is strongly convex in the ball $B_2^p(\theta_0,\eta_1(\delta,\alpha)).$ Thus,  there exists a unique stationary point of  $\hat{R}_{n}(\theta),$ which is the corresponding M-estimator.
\end{corollary}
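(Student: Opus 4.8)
The plan is to recognize that Corollary~\ref{cor1} is just the specialization of Theorem~\ref{thm2} to the Gaussian design and noise of Assumption~\ref{assume2}, so the whole task reduces to (i) checking Assumption~\ref{assume2} implies Assumption~\ref{assume1}, and (ii) evaluating the two critical radii $\eta_0,\eta_1$ of Theorem~\ref{thm1} in closed form for Huber's score. First I would verify the hypotheses: a $N(0,\tau^2 I)$ design is exactly $\tau^2$-sub-Gaussian and has $\E[XX^\top]=\tau^2 I$, so Assumption~\ref{assume1}(b),(c) hold with $\gamma=1$; Huber's score $\psi(z)=\max(-\alpha,\min(\alpha,z))$ is odd, nonnegative on $z\ge0$, and bounded with $\|\psi\|_\infty=\alpha$ and $\|\psi'\|_\infty=1$; and $h(z)=\E_{\epsilon\sim N(0,\sigma^2)}[\psi(z+\epsilon)]$ is smooth (Gaussian convolution mollifies the kink of $\psi$) with $h'(z)=P(|z+\epsilon|\le\alpha)>0$ for all $z$ and $h'(0)=2\Phi(\alpha/\sigma)-1>0$, so Assumption~\ref{assume1}(d) holds. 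The only genuine caveat is that $\psi$ is not twice differentiable at $\pm\alpha$, so Assumption~\ref{assume1}(a) fails literally; I would note that twice-differentiability is invoked only in the Hessian/tractability half, which I instead obtain directly from convexity of Huber's loss.

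For $\eta_1$: since $\rho_\alpha$ is convex, both $\hat R_n$ and $R$ are convex, with Hessians $\frac1n\sum_i\mathbf{1}(|r_i|\le\alpha)x_ix_i^\top$ and $\E[\mathbf{1}(|\epsilon-\langle\Delta,X\rangle|\le\alpha)XX^\top]$ respectively, where $\Delta=\theta-\theta_0$. I would show the population Hessian is positive definite at \emph{every} $\theta\in B_2^p(0,r)$: for any unit $\nu$, $\nu^\top\nabla^2R(\theta)\nu\ge(1-\delta)\,\E_{f_0}[\mathbf{1}(|\epsilon-\langle\Delta,X\rangle|\le\alpha)\langle\nu,X\rangle^2]>0$, since on the good component the residual lands in $[-\alpha,\alpha]$ with positive probability on a set where $\langle\nu,X\rangle\ne0$. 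By compactness $\inf_{\theta\in B_2^p(0,r)}\lambda_{\min}(\nabla^2R(\theta))=:\kappa>0$, so the strongly-convex region is the entire feasible ball, i.e. $\eta_1=+\infty$; Lemma~\ref{lemma1}(b) then transfers positive-definiteness to $\nabla^2\hat R_n$ with high probability, so the empirical objective is strongly convex on $B_2^p(0,r)$.

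For $\eta_0$: I would instantiate the restoring-force computation behind Theorem~\ref{thm1}(a). With $u=\langle\Delta,X\rangle$, the directional derivative splits as $\langle\Delta,\nabla R(\theta)\rangle=(1-\delta)\,\E_u[h(u)u]-\delta\,\E_{\mathrm{bad}}[\psi(\epsilon-u)u]$. On the good part, conditioning on $u\sim N(0,\tau^2\|\Delta\|_2^2)$ and applying Gaussian integration by parts (Stein's identity) gives the clean form $\E_u[h(u)u]=\tau^2\|\Delta\|_2^2\,\E[h'(u)]=\tau^2\|\Delta\|_2^2\big(2\Phi(\alpha/\sqrt{\sigma^2+\tau^2\|\Delta\|_2^2})-1\big)$, since $h'(u)=P(|u+\epsilon|\le\alpha)$ and $u+\epsilon\sim N(0,\sigma^2+\tau^2\|\Delta\|_2^2)$. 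The outlier part is bounded by $\|\psi\|_\infty\E_X|u|=\alpha\tau\|\Delta\|_2\sqrt{2/\pi}$, using that $X$ remains $N(0,\tau^2 I)$ under the contaminated component. Hence $\langle\Delta,\nabla R(\theta)\rangle>0$ whenever $\|\Delta\|_2>\frac{\delta}{1-\delta}\frac{\alpha\sqrt{2/\pi}}{\tau(2\Phi(\alpha/\sqrt{\sigma^2+\tau^2\|\Delta\|_2^2})-1)}$, and lower-bounding the Gaussian probability over the feasible annulus $\|\Delta\|_2\le 4r/3$ by $2\Phi(\cdot)-1\ge\frac{2\alpha}{\sqrt{2\pi(\sigma^2+\tau^2\|\Delta\|_2^2)}}e^{-\alpha^2/(2(\sigma^2+\tau^2\|\Delta\|_2^2))}$ yields a closed-form $C_1$ of the stated type, the exponential $e^{(\alpha^2+c\,\tau^2 r^2)/(2\sigma^2)}$ being exactly this Gaussian-tail factor after replacing the variance by its maximum on the ball. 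Feeding the explicit $\eta_0$ and $\eta_1=+\infty$ into Theorem~\ref{thm2} then gives the conclusion.

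The main obstacle is the uniform lower bound on the good-data restoring force over the whole feasible ball: the coefficient $2\Phi(\alpha/\sqrt{\sigma^2+\tau^2\|\Delta\|_2^2})-1$ decreases as $\|\Delta\|_2$ grows (the effective variance inflates), so one cannot merely linearize at $\Delta=0$, and I would control it by replacing the variance with its maximal value $\sigma^2+\tfrac{16}{9}\tau^2 r^2$ and invoking the elementary Gaussian lower bound — precisely the source of the exponential constant (tracking it through the looser intermediate bounds is what produces the printed $22$). A secondary, purely technical point is reconciling the non-smoothness of $\psi$ at $\pm\alpha$ with Assumption~\ref{assume1}(a); since this touches only the Hessian argument, which we replace by convexity, it causes no real difficulty.
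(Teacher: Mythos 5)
Your skeleton coincides with the paper's: verify Assumption \ref{assume1} under Assumption \ref{assume2} (with $\gamma=1$ and the bounds $|\psi_\alpha|\le\alpha$, $|\psi_\alpha'|\le1$), then read $\eta_0$ and $\eta_1$ off equations (\ref{eq4}) and (\ref{eq12}) from the proof of Theorem \ref{thm1} and feed them into Theorem \ref{thm2}. For $\eta_1$ the paper simply asserts the bound on $\psi_\alpha''$ is $0$, so the denominator of (\ref{eq12}) vanishes and $\eta_1=+\infty$; your convexity argument (since $\psi_\alpha'=\mathbf{1}(|t|\le\alpha)\ge0$, even the contaminated part of the Hessian is positive semidefinite, and the $f_0$-part is uniformly positive definite on the compact ball) is a more careful rendering of the same conclusion, and is more honest than the paper's literal claim ``$|\psi'|=|\psi''|=0$.'' One inconsistency, though: having set the non-smoothness of $\psi_\alpha$ at $\pm\alpha$ aside, you then invoke Lemma \ref{lemma1}(b) to transfer positive definiteness to $\nabla^2\hat{R}_n$, but the proof of that lemma itself uses the bound on $\psi''$; for Huber you should either note $\psi_\alpha''=0$ almost everywhere (as the paper implicitly does) or get uniqueness of the stationary point directly from convexity of $\hat{R}_n$. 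For $\eta_0$, your Stein-identity computation $\mathbf{E}[h(u)u]=\tau^2\|\Delta\|_2^2\,\bigl(2\Phi(\alpha/\sqrt{\sigma^2+\tau^2\|\Delta\|_2^2})-1\bigr)$ is exact and cleaner than the paper's route, which truncates at $\tilde{s}=\frac{8\tau r}{3}\sqrt{c_2/\gamma}$ and uses fourth moments plus Chebyshev; your version needs neither $c_2$ nor the truncation, and the exponent $\alpha^2/(2(\sigma^2+\frac{16}{9}\tau^2r^2))$ it produces is exponentially sharper than the printed $(\alpha^2+22\tau^2r^2)/(2\sigma^2)$.

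The genuine gap is in your last inequality. The bound $2\Phi(x)-1\ge2x\phi(x)$ keeps only the linear term of the Gaussian integral and yields $\eta_0'=\frac{\delta}{1-\delta}\frac{s_{\max}}{\tau}e^{\alpha^2/(2s_{\max}^2)}$ with $s_{\max}^2=\sigma^2+\frac{16}{9}\tau^2r^2$; this prefactor has no $(\alpha^2+3\sigma^2)^{-1}$ decay, so your radius is \emph{not} always below the printed one: as $\tau r\to0$ the ratio $\eta_0'/\eta_0=(\alpha^2+3\sigma^2)/(4\sqrt{2\pi}\sigma^2)$ exceeds $1$ once $\alpha^2>(4\sqrt{2\pi}-3)\sigma^2$, i.e.\ $\alpha\gtrsim2.65\sigma$, and in that regime you have not proved containment in $B_2^p(\theta_0,2\eta_0)$ with the stated $\eta_0$. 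The paper avoids this because its lower bound on $h(z)$ retains the cubic contribution $\frac{z\alpha^3}{3\sqrt{2\pi}\sigma^3}e^{-(z^2+\alpha^2)/(2\sigma^2)}$ from integrating $t\cdot(tz/\sigma^2)$ over $[0,\alpha]$, and it is exactly this term that produces the $(\alpha^2+3\sigma^2)$ denominator in the printed constant. In your language the repair is one line: on $[0,x]$, $e^{-t^2/2}\ge e^{-x^2/2}\bigl(1+(x^2-t^2)/2\bigr)$, hence $2\Phi(x)-1\ge\frac{2}{\sqrt{2\pi}}\bigl(x+\frac{x^3}{3}\bigr)e^{-x^2/2}$; taking $x=\alpha/s_{\max}$ gives $\eta_0''=\frac{\delta}{1-\delta}\frac{3s_{\max}^3}{(3s_{\max}^2+\alpha^2)\tau}e^{\alpha^2/(2s_{\max}^2)}$, which one checks sits below the printed $\eta_0$ in all regimes (as $\tau r\to0$ the ratio is $3/(4\sqrt{2\pi})<1$, while for large $\tau r$ or large $\alpha$ the paper's exponential slack dominates), so the corollary as stated then follows a fortiori.
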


Note $\eta_1(\delta,\alpha)=\infty,$ which means the corresponding Huber's estimator will be tractable, no matter there are outliers or not. This is consistent with the fact that the Huber's loss function is convex.
Moreover, it is interesting to see the special case of Corollary \ref{cor1} with $\alpha = +\infty,$ which reduces to the least square estimator. As we can see, with $\delta >0,$ we have $\eta_0(\delta,\alpha=+\infty)=+\infty,$ which implies  the solution of the optimization problem in (\ref{exp1}) can be arbitrarily in the ball $B_2^{p}(0,r=10),$ even when the proportion of outliers is small. Thus it is not robust to the outliers. This recovers the well-known fact:  the least square estimator is easy to compute, but is very sensitive to outliers.

Additionally, for another special case with $\delta=0$ and $\alpha>0,$ we have $\eta_0(\delta=0,\alpha)=0,$ which means the true parameter $\theta_0$ is the unique stationary point of the risk function. This implies the Huber's estimator is consistent when there are no outliers.

\subsection{Penalized M-estimator via Welsch's exponential squared loss}

In this subsection, we illustrate the general results presented in Section~\ref{third.M-est} by considering a family of M-estimators with a specific nonconvex loss function known as Welsch's exponential squared loss \citep{dennis:1978,rey:2012,wang:2013},
\begin{eqnarray}\label{welsh}
\rho_{\alpha}(t)=\frac{1-\exp(-\alpha t^2/2)}{\alpha},
\end{eqnarray}
where $\alpha\ge 0$ is a tuning parameter. The corresponding penalized M-estimator is obtained by solving the optimization problem
\begin{eqnarray}\label{exp2}
\underset{\theta}{\min}&& \hat{L}_{n}(\theta):=\frac{1}{n} \sum_{i=1}^{n}\rho_{\alpha}(y_i-\langle\theta, x_i\rangle)+\lambda_n||\theta||_1,\\
\text{subject to}&& ||\theta||_2\le r. \nonumber
\end{eqnarray}
The non-convex loss function $\rho_{\alpha}(t)$ in (\ref{welsh}) has been used in other contexts such as robust estimation and robust hypothesis testing, see \cite{ferrari:2010,qin:2013}, as it has many nice properties. First, it is a smooth function of both $\alpha$ and $t,$ and the gradient and Hessian are well-defined. Second,  when $\alpha$ goes to $0,$  $\rho_{\alpha}(t)$  will converges to $t^2/2.$ Thus, the LASSO estimator is a special case of the M-estimator obtained from (\ref{exp2}). Third, for fixed $\alpha>0,$ $\rho_{\alpha}(t), \rho_{\alpha}'(t),\rho_{\alpha}''(t)$ are all bounded. Intuitively, this implies that the impact of outlier observations of $y_i$ will be controlled and thus the corresponding statistical procedure will be robust.

We now study the robustness and tractability of the penalized M-estimator of (\ref{exp2}) based on our framework in Theorem \ref{thm3}.  When $\alpha$ goes to $0,$ the M-estimator reduces to the LASSO estimator, which can be computed easily. However, it is also known to be very sensitive to the outliers \citep{alfons:2013}. On the other hand, when $\alpha$ increases, the estimator becomes  more robust, but may lose tractability due to the highly non-convexity of the function $\rho_{\alpha}(t)$ as well as the presence of the outliers.

In order to emphasize on the relation between the tuning parameter $\alpha$ and the contamination ratio $\delta,$ we consider a simplified assumption on the feature vector $x_i$ and the pdf of idealized residual $f_0.$
\begin{assumption}\label{assume4}
  \begin{description}
  \item[(a)] The feature vector $x_i$ are i.i.d multivariate uniform distribution $[-\tau,\tau]^p.$
  \item[(b)] The idealized noise pdf $f_0(\epsilon)$ has Gaussian distribution $N(0,\sigma^2).$
  \item[(c)] The true parameter $||\theta_0||_2\le r/3.$
\end{description}
\end{assumption}

With Assumption \ref{assume4} and  Theorem \ref{thm3}, we can get the following corollary, which characterizes the robustness and tractability of the penalized M-estimator with Welsch's exponential squared loss in (\ref{exp2}):
\begin{corollary}\label{cor2}
Assume that Assumption \ref{assume4} holds and  the true parameter $\theta_0$ satisfies $||\theta_0||_2\le r/3,$ for any $\pi\in(0,1),$ there exist a constant $C_{\pi}$ such that if choose $\lambda_n= 2C_{\pi}\tau\sqrt{\frac{\log p}{n}}+\frac{\alpha\tau}{2}\frac{\delta}{\sqrt{s_0}},$ as $n>>s_0\log p,$ the following hold with probability as least $1-\pi:$
 \begin{description}
        \item[(a)] All stationary points of problem (\ref{exp2}) are in $B_2^p(\theta_0, (1+2\tau)\eta_0)$
        \item[(b)] The empirical risk function $\hat{L}_{n}(\theta)$ are strong convex in the ball $B_2^p(\theta_0, \eta_1)$
  \item[(c)] As long as $n$ is large enough and the contamination ratio $\delta$ is small such that $\left(1+2\tau\right)\eta_0\le \eta_1, $ the problem (\ref{exp2}) has a unique local stationary point which is also the global minimizer.
\end{description}
Here
\begin{eqnarray}
 \eta_0(\delta,\alpha)&=&\frac{\delta}{1-\delta}\sqrt{\frac{e}{\alpha}}\frac{4(1+\alpha\sigma^2)^{3/2}}{\tau}e^{\frac{32\alpha r^2\tau^2}{3(1+\alpha\sigma^2)}}\\
  \eta_1(\delta,\alpha)&=&\frac{1}{3\sqrt{3\alpha}(1+\alpha\sigma^2)^{3/2}\tau}\left[\tau^2-\delta(\tau^2+(1+\alpha\sigma^2)^{3/2})\right].
 \end{eqnarray}
\end{corollary}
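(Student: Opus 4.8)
The plan is to read Corollary~\ref{cor2} as a concrete instantiation of Theorem~\ref{thm3}: under the specialized design and noise of Assumption~\ref{assume4}, I would (i) verify that Assumptions~\ref{assume1} and~\ref{assume3} hold and pin down the structural constants $(L_\psi,\gamma,\tau,M)$, (ii) evaluate the loss-specific quantities, most importantly the convolution $h$, in closed form so that the abstract radii $\eta_0$ and $\eta_1$ of Theorem~\ref{thm1} become the explicit expressions claimed, and (iii) feed these into Theorem~\ref{thm3} together with the prescribed $\lambda_n$ to obtain parts (a)--(c).

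First I would check the hypotheses. For the Welsch loss the score is $\psi(t)=\rho_\alpha'(t)=t e^{-\alpha t^2/2}$, which is odd, nonnegative on $[0,\infty)$, and twice differentiable with $\psi,\psi',\psi''$ all uniformly bounded; maximizing $|\psi|$ at $t=1/\sqrt\alpha$ gives $\sup_t|\psi(t)|=1/\sqrt{\alpha e}$, and analogous elementary optimizations fix a finite $L_\psi$, verifying Assumption~\ref{assume1}(a). Each coordinate of the uniform design lies in $[-\tau,\tau]$, hence is $\tau^2$-sub-Gaussian by Hoeffding's lemma (Assumption~\ref{assume1}(b)); its covariance is $\tfrac{\tau^2}{3}I_{p\times p}$, so Assumption~\ref{assume1}(c) holds with $\gamma=1/3$; and $\|x\|_\infty\le\tau$ almost surely, so Assumption~\ref{assume3} holds with $M$ any constant exceeding $1$. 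Finally $f_0=N(0,\sigma^2)$ is symmetric, strictly positive and decreasing on $(0,\infty)$, so by the remark following Assumption~\ref{assume1} the function $h$ satisfies $h(z)>0$ for $z>0$ and $h'(0)>0$, giving Assumption~\ref{assume1}(d).

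Next I would compute $h$ explicitly. Since $f_0$ is Gaussian and $\psi(t)=te^{-\alpha t^2/2}$, the integral $h(z)=\int f_0(\epsilon)\,\psi(z+\epsilon)\,d\epsilon$ is a product-of-Gaussians computation; completing the square yields
\[
h(z)=\frac{z}{(1+\alpha\sigma^2)^{3/2}}\,\exp\!\Big(-\frac{\alpha z^2}{2(1+\alpha\sigma^2)}\Big),
\qquad\text{so}\qquad h'(0)=(1+\alpha\sigma^2)^{-3/2}.
\]
With $h$ in hand I would retrace the population-level arguments behind Theorem~\ref{thm1}. The radius $\eta_0$ arises from balancing, in $\langle\theta-\theta_0,\nabla R(\theta)\rangle$, the monotone ``signal'' term driven by $h$ (odd, with $h(z)z\ge0$) against the outlier bias, which is controlled by $\delta\sup_t|\psi(t)|$ together with the spread of $\langle\theta-\theta_0,X\rangle$; bounding the latter over $\|\theta\|_2\le r$ via $\|X\|_\infty\le\tau$ produces the exponential factor $e^{32\alpha r^2\tau^2/(3(1+\alpha\sigma^2))}$ and, together with $1/h'(0)=(1+\alpha\sigma^2)^{3/2}$, the stated $\eta_0(\delta,\alpha)$. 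The radius $\eta_1$ comes from lower bounding $\lambda_{\min}(\nabla^2 R(\theta))=\E[\psi'(Y-\langle\theta,X\rangle)\langle\nu,X\rangle^2]$; writing $h'(z)=\E_{f_0}[\psi'(z+\epsilon)]$ and using the explicit $h'$ together with the outlier correction $-\delta(\tau^2+(1+\alpha\sigma^2)^{3/2})$ gives the strong-convexity radius $\eta_1(\delta,\alpha)$, which is the content of part (b).

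Finally I would apply Theorem~\ref{thm3} with $\lambda_n=2C_\pi\tau\sqrt{\log p/n}+\tfrac{\alpha\tau}{2}\,\delta/\sqrt{s_0}$, matching its two pieces to $C_0 M\sqrt{\log p/n}$ and $C_1\delta/\sqrt{s_0}$ through the constants just identified. Theorem~\ref{thm3}(a) places every stationary point in $B_2^p(\theta_0,\eta_0+\tfrac{\sqrt{s_0}}{1-\delta}\lambda_n C_2)$; since $n\gg s_0\log p$ kills the statistical part of $\lambda_n$, the remaining bias contribution is of the same order as $\eta_0$, and I would show it is at most $2\tau\eta_0$, collapsing the radius to $(1+2\tau)\eta_0$ as in part (a). Part (c) is then the uniqueness conclusion of Theorem~\ref{thm3}(b) under $(1+2\tau)\eta_0\le\eta_1$. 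I expect the main obstacle to be the explicit constant bookkeeping in step (iii): squeezing the crude outlier-bias and perturbation bounds tightly enough (and choosing $C_2$ compatibly with the given $\lambda_n$) so that the stationary-point radius is exactly $(1+2\tau)\eta_0$ rather than some larger multiple, and obtaining the clean lower bound on $\lambda_{\min}(\nabla^2R)$ for the genuinely non-convex Welsch loss that yields $\eta_1$ in the stated form.
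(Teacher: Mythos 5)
Your proposal follows essentially the same route as the paper: verify Assumptions \ref{assume1} and \ref{assume3} for the Welsch score and uniform design (with $\gamma=1/3$, $c_2=3$, $\|x\|_\infty\le\tau$), compute $h(z)=z(1+\alpha\sigma^2)^{-3/2}e^{-\alpha z^2/(2(1+\alpha\sigma^2))}$ and $h'(0)=(1+\alpha\sigma^2)^{-3/2}$ in closed form exactly as the paper does, substitute into the explicit $\eta_0,\eta_1$ formulas from the proof of Theorem \ref{thm1}, and then instantiate the stationary-point radius $r_s$ from Lemma \ref{lemma5} with the prescribed $\lambda_n$ so that as $n\gg s_0\log p$ the statistical term vanishes and the radius collapses to $(1+2\tau)\eta_0$, just as in the paper's proof. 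The only cosmetic discrepancy is that you use the tight bound $\sup_t|\psi_\alpha(t)|=1/\sqrt{\alpha e}$ whereas the paper's displayed $\eta_0$ is derived from the looser bound $\sqrt{e/\alpha}$, so to reproduce the stated constant verbatim you should adopt the latter (your tighter bound yields a smaller radius and hence still implies the corollary as stated).
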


It is interesting to see the special case of Corollary \ref{cor2} with $\alpha = 0,$ which reduces to the LASSO estimator. On the one hand, with $\alpha = 0,$ we have $\eta_1(\delta,\alpha=0)=+\infty$  for any $\delta>0.$ This means that the corresponding risk function is strongly convex in the entire region of $B_2^{p}(0,r=10)$, and hence it is always tractable.  On the other hand,  since $\eta_0(\delta,\alpha=0)=+\infty,$   the solution of the optimization problem in (\ref{exp2}) can be arbitrarily in the ball $B_2^{p}(0,r=10),$ even when the proportion of outliers is small. Thus it is not robust to the outliers. This recovers the well-known fact:  the LASSO estimator is easy to compute, but is very sensitive to outliers.

Additionally, for another special case with $\delta=0$ and $\alpha>0,$ we have $\eta_0(\delta=0,\alpha)=0,$ which means the true parameter $\theta_0$ is the unique stationary point of the risk function. This implies the Welsch's estimator has nice tractability when there is no outliers. However, when the percentage of outlier $\delta$ is increasing, $\eta_1(\delta,\alpha)$ will decrease, which implies more outliers will reduce the tractability of the M-estimator.
\section{Simulation results}
In this section, we   report the simulation results by using Welsch's exponential loss \citep{dennis:1978} when the data are contaminated, using synthetic data setting.
We first generate covariates $x_i\sim N(0,I_{p\times p})$ and responses $y_i=\langle\theta_0, x_i\rangle+\epsilon_i,$ where $||\theta_0||_2=1.$
We consider the case when the residual term $\epsilon_i$ have gross error model with contamination ratio $\delta$, i.e., $\epsilon_i\sim (1-\delta)N(0,1)+\delta N(\mu_{i}, 3^2)$ where $\mu_i = ||x_i||^2_2+ 1.$ The outlier distribution is chosen to highlight the effects of outliers when they are dependent on $x_i$ and has non-zero mean.

In the first part, we consider the low-dimensional case when the dimension $p=10.$ Specifically, we generate $n=200$ pairs of data $(y_i,x_i)_{i=1,..,n}$ with dimension $p=10$ and with different choices of contamination ratios $\delta.$
We use projected gradient descent to solve the optimization problem in (\ref{exp1}) with  $r=10.$ In order to make the iteration points be inside the ball, we will
project the points back into $B_2^p(0,r=10)$ if they fall out of the ball. The step size is fixed as $1.$
In order to test the tractability of the M-estimator, we run gradient descent algorithm with $20$ random initial values in the ball $B_2^p(0,r=10)$ to see whether the gradient descent algorithm can converge to the same stationary point or not. Denote $\hat{\theta}(k)$ as the $k^{th}$ iteration points, Figure \ref{fig1} shows the convergence of the gradient descent algorithm for the exponential loss   with the choice of $\alpha=0.1$ under the gross error model with different $\delta.$ From Figure \ref{fig1} we observe when the proportion of outliers is small (i.e., $\delta\le0.1,$)  gradient descent could converge to the same stationary point fast.  However, when the contamination ratio $\delta$ becomes larger, gradient descent may not converge to the same point for different initial points, indicating the loss of tractability {\em for the same objective function} with   increasing proportion of outliers. Those observations are consistent to our Theorem \ref{thm2}, which asserts the  M-estimator is tractable {when the contamination ratio $\delta$ is small}.

To illustrate the robustness of the M-estimator, we generate $100$ realizations of $(Y,X)$ and run gradient descent algorithm with different initial values. The average estimation errors between the M-estimator and the true parameter $\theta_0$ are presented in Figure \ref{fig3}.
As we can see, when $\delta=0,$ all estimators have small estimation errors, which are well expected as those M-estimators are consistent without outliers \citep{huber:1964,huber:2009}. However, for the M-estimator with $\alpha=0,$ i.e., the least square estimator, the estimation error will increase dramatically as the proportion of outliers increases. This confirms that the least square estimator is not robust to the outliers.

Meanwhile, when $\alpha = 0.1,$ the overall estimation error does not increase much even with $40\%$ outliers, which clearly demonstrate the robustness of the M-estimator. Note that when $\alpha$ is further increased from $0.1$ to $0.3,$ although the estimator error is still very small for $\delta\leq 0.2,$ it will increase dramatically when $\delta$ is greater than $0.2.$  We believe that two reasons contribute to this phenomenon: robustness starts to decrease when $\alpha$ becomes too large; and more importantly, the algorithm fails to find the global optimum due to multiple stationary points when $\alpha$ is large. Thus for each $\alpha,$ there exists a critical bound of $\delta,$ such that the estimator will be robust and tractable efficiently when the proportion of outliers is smaller than that bound.

In the second part, we present our results in the high-dimensional region when $p=400$. Data $(y_i, x_i)$ are generated from the same gross error model in the previous simulation study, with the true parameter $\theta_0$ a sparse vector with $10$ nonzero entries. All nonzero entries are set to be $1/\sqrt{10}.$ We use proximal gradient descent algorithm to solve problem (\ref{form3}). Similarly, we will
project the points back into $B_2^p(0,r=10)$ if they fall out of the ball.
Figure \ref{fig8} shows the convergence of the proximal gradient descent algorithm for the nonconvex exponential loss with the choice of $\alpha=0.1$ and $L_1$ regularizer with the parameter $\lambda=0.1$ under the gross error model with different $\delta.$ From Figure \ref{fig8} we observe when the percentage of outliers is small, the algorithm will converge to the same stationary point fast, which implies there is only one unique stationary point. When $\delta$ is larger, the converge rate become slower, which implies there may exist another stationary points. Those simulation results reflect our theoretical result for the tractability of the penalized M-estimator in high-dimensional regression.

\begin{figure}[htp]
  \begin{minipage}[t]{0.49\linewidth}
  \includegraphics[width=1\linewidth,height=1.8in]{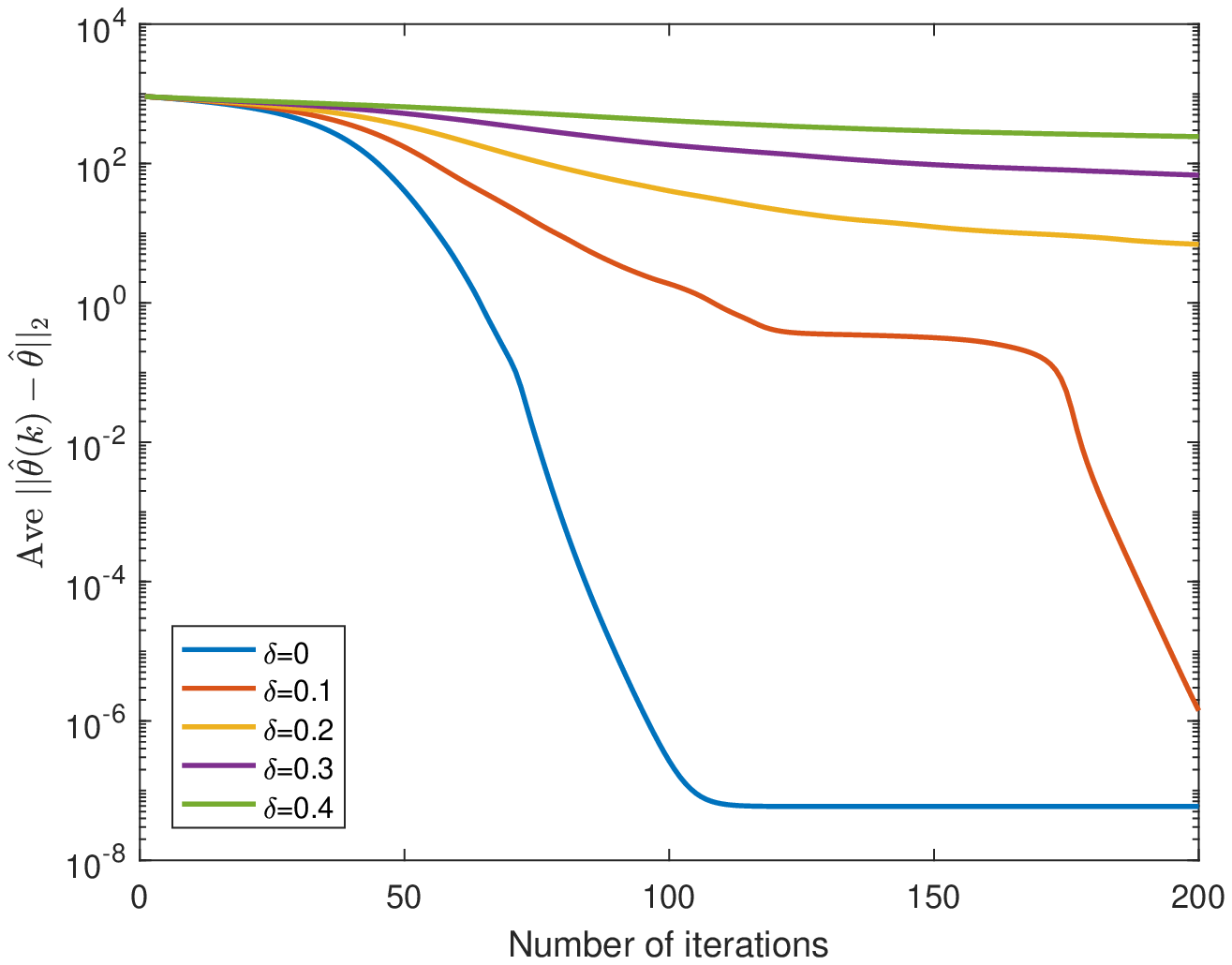}
  \caption{The convergence of gradient descent algorithm for different $\delta.$ Y-axis is with log scale.}\label{fig1}
  \end{minipage}
  \hfill
  \begin{minipage}[t]{0.49\linewidth}
  \includegraphics[width=1\linewidth,height=1.8in]{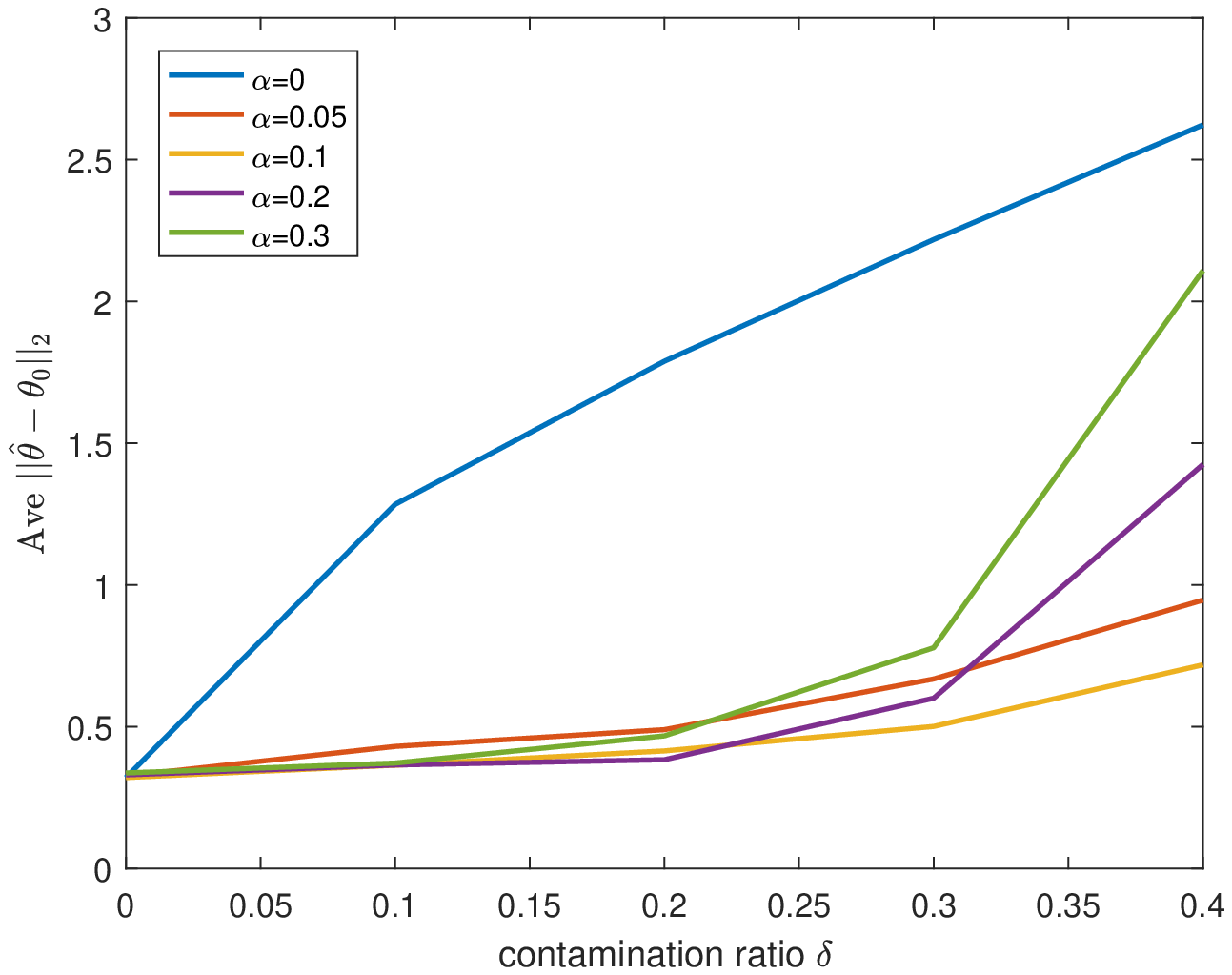}
  \caption{The estimation error for different  $\alpha$ and $\delta$}\label{fig3}
  \end{minipage}
\end{figure}

\begin{figure}[htp]
\centering
  \includegraphics[width=0.49\linewidth,height=1.8in]{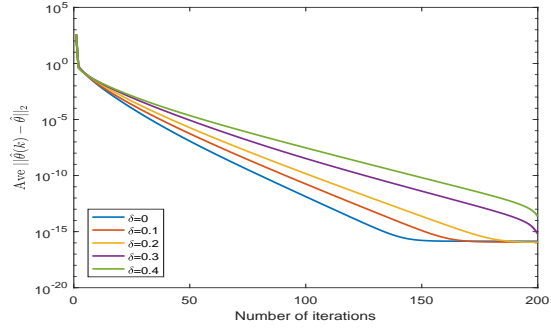}
  \caption{The convergence of gradient descent algorithm for different $\delta.$ Y-axis is with log scale.} \label{fig8}
\end{figure}

\section{Case study}
In this section, we present a case study of the robust regression problem for the Airfoil Self-Noise  dataset \citep{brooks:2014}, which is available on UCI Machine Learning Repository. The dataset was processed by NASA and
is commonly used for regression study to learn the relation between the airfoil self-noise and five explanatory variables.
Specifically, the dataset contain the following $5$ explanatory variables: Frequency (in Hertzs),
Angle of attack (in degrees),
Chord length,(in meters),
Free-stream velocity (in meters per second),
and Suction side displacement thickness (in meters).
There are $1503$ observations in the dataset. The response variable is Scaled sound pressure level (in decibels).
In this section, the five explanatory variables are scaled to have zero mean and unit variance. Then, we corrupt the response by adding noise $\epsilon$ from the same gross error model as the previous section: $\epsilon_i \sim (1-\delta)N(0,1)+\delta N(\mu_i, 3^2)$ with $\mu_i = ||x_i||^2_2+1.$

We consider the M-estimator using Welsch's exponential loss \citep{dennis:1978} on the dataset to validate the tractability and the robustness of the corresponding M-estimator.
First, we run $100$ Monte Carlo simulations. At each time, we split the dataset which consists of  $1503$ pairs of data into a training dataset of size $1000$ and a testing dataset of size $503.$ Then for the training dataset, we use gradient descent method with $20$ different initial values to update the iteration points.

Figure \ref{fig6} shows the average distance between each iteration point and the optimal point with the choice of $\alpha=0.7$ and step size  $0.5$.  Clearly, when $\delta$ is smaller than $0.3,$ gradient descent will converge to the same local minimizer, which implies the uniqueness of the stationary point. This result demonstrates the nice tractability of the M-estimator under the gross error model when the proportion of outliers is small. Then, using the optimal point as the M-estimator, we calculate the prediction error, which is the mean square error on the testing data. Figure \ref{fig7} shows the average prediction error on the testing data. As we can see, the prediction error with the choice of $\alpha=0$ will increase dramatically when the percentage of outliers increases. In contrast, the prediction errors of M-estimators with $\alpha= 0.4$ is stable even with a large percentage of outliers. This illustrates the robustness of M-estimators for some positive $\alpha$.

\begin{figure}[htp]
  \begin{minipage}[t]{0.49\linewidth}
  \includegraphics[width=1\linewidth,height=1.8in]{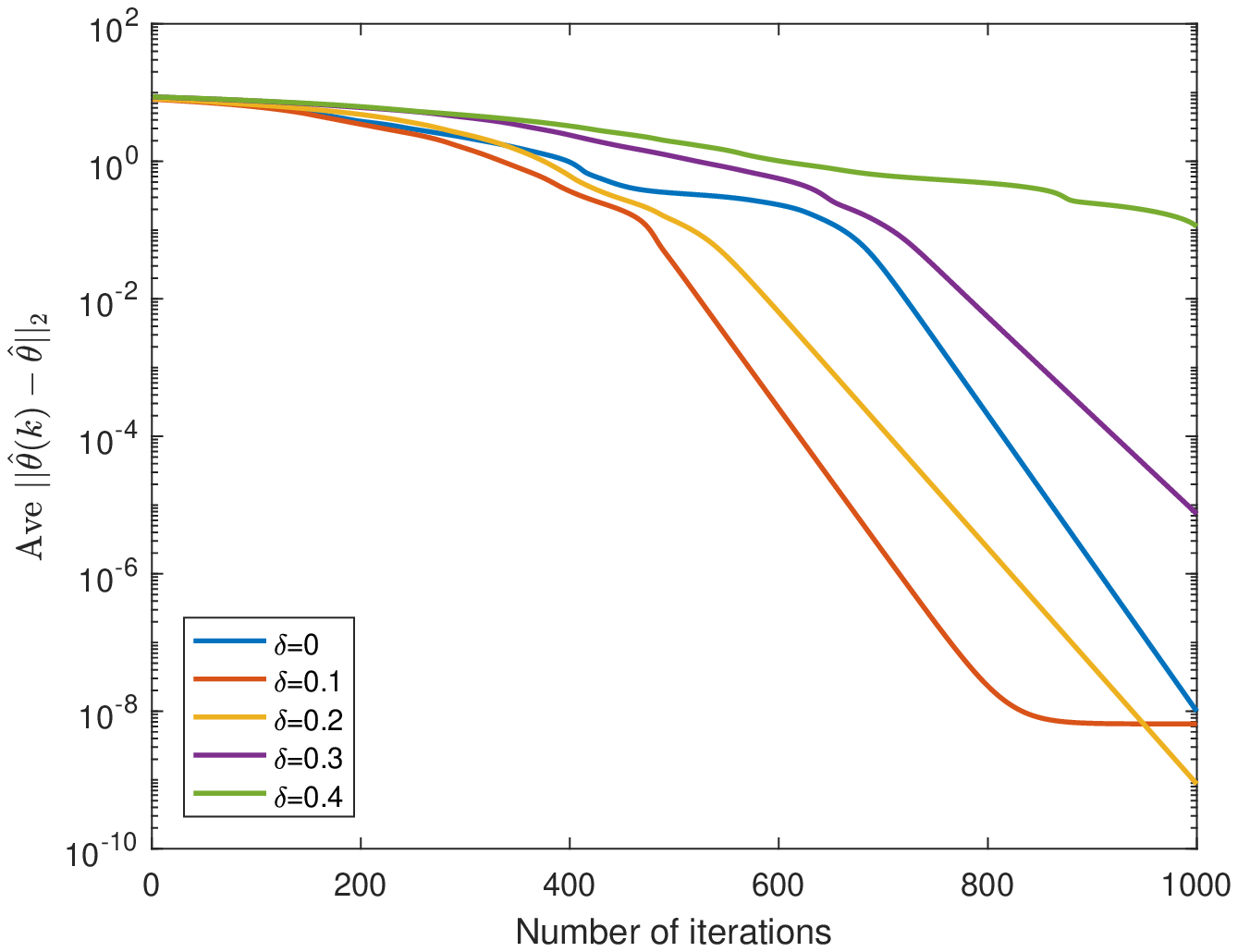}
  \caption{The convergence of gradient descent algorithm for different $\delta.$ Y-axis is with log scale.}\label{fig6}
  \end{minipage}
  \hfill
  \begin{minipage}[t]{0.49\linewidth}
  \includegraphics[width=1\linewidth,height=1.8in]{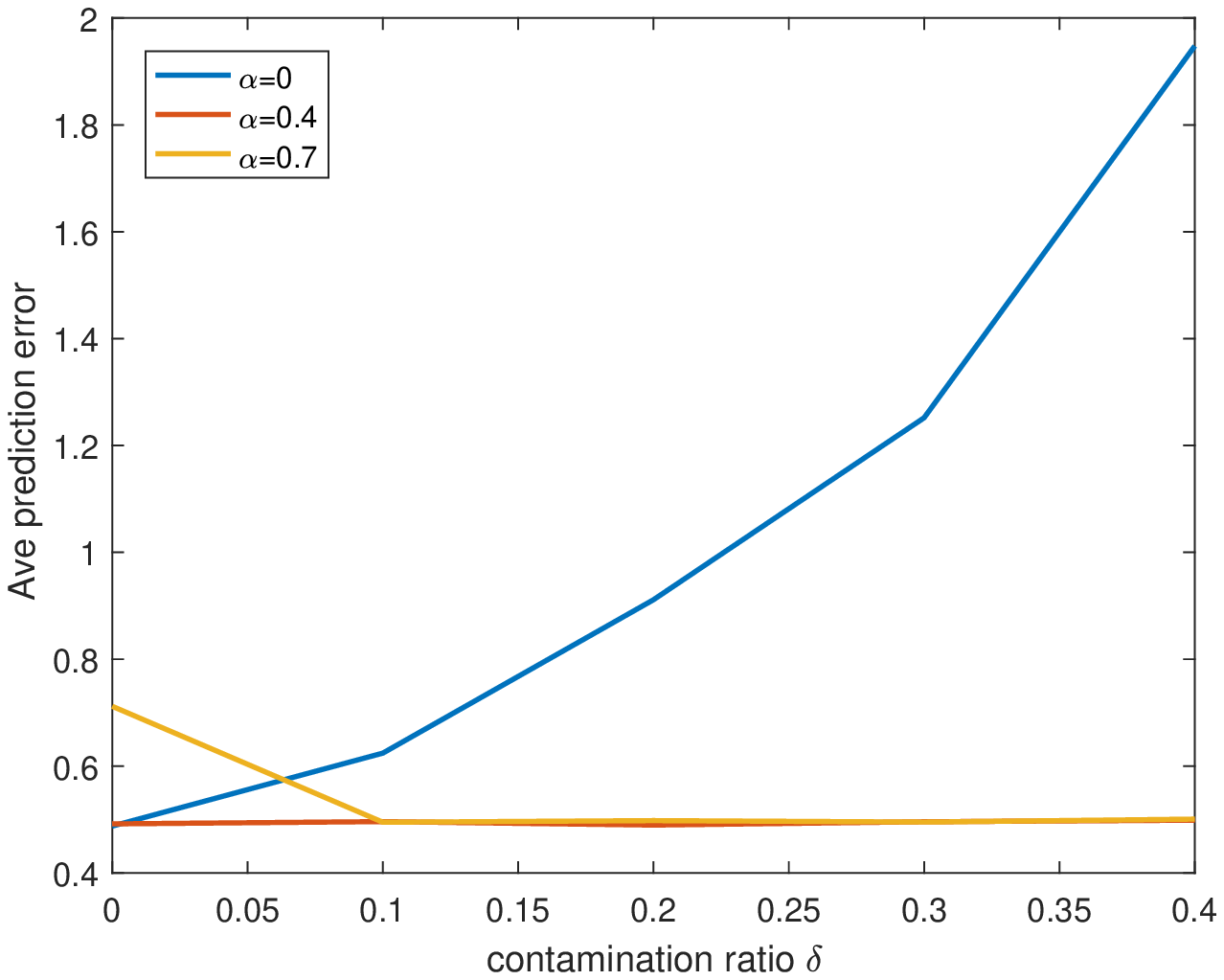}
  \caption{The prediction error for different  $\alpha$ and $\delta$}\label{fig7}
  \end{minipage}
\end{figure}
%
%
%
\section{Conclusions}

In this paper, we investigate the robustness and computational tractability of general (non-convex) M-estimators in both classical low-dimensional regime and modern high-dimensional regime. In terms of \emph{robustness}, in the low-dimensional regime, we show the estimation error of the M-estimator is as the order of $O(\delta+\sqrt{\frac{p\log n}{n}}),$ which nearly achieves the minimax lower bound of $O(\delta+\sqrt{\frac{p}{n}})$ in \cite{chen:2016}. In the high-dimensional regime, we show the estimation error of the penalized M-estimator has the estimation error as the order of $O(\delta+\sqrt{\frac{s_0\log p}{n}}),$ which achieves the minimax estimation rate \citep{chen:2016}.

In terms of \emph{tractability}, our theoretical results imply under sufficient conditions, when the percentage of arbitrary outliers is small, the general M-estimator could have good computational tractability since it has only one unique stationary point, even if the loss function is non-convex. Therefore, M-estimators can tolerate certain level of outliers by keeping both estimation accuracy and computation efficiency. Both simulation and real data case study are conducted to validate our theoretical results about the robustness and tractability of M-estimation in the presence of outliers.
\newpage

\section{Appendix}
\textbf{Proof of Lemma \ref{lemma1}}: In order to prove the uniform convergency theorem,  it is suffice to verify assumption 1, 2 and 3 in \cite{mei:2016}. Specifically, first, we will verify that the directional gradient of the population risk is sub-Gaussian (Assumption 1 in \cite{mei:2016}). Note the directional gradient of the population risk is given by $\langle \nabla \rho(Y-\langle X,\theta\rangle),\nu\rangle=\psi(Y-\langle X,\theta\rangle)\langle X,\nu\rangle.$ Since $|\psi(Y-\langle X,\theta\rangle)|\le L_{\psi},$ and  $\langle X,\nu\rangle$ is mean zero and $\tau^2$-sub-Gaussian by our assumption 1, due to Lemma 1 in \cite{mei:2016}, there exists a universal constant $C_1,$ such that $\langle \nabla \rho(Y-\langle X,\theta\rangle),\nu\rangle$ is $C_1L_{\psi}\tau^2-$sub-Gaussian. Second, we will verify that the directional Hessian of the loss is sub-exponential (Assumption 2 in \cite{mei:2016}). The directional Hessian of the loss gives $\langle \nabla^2 \rho(Y-\langle X,\theta\rangle) \nu,\nu\rangle=\psi'(Y-\langle X,\theta\rangle)\langle X,\nu\rangle^2.$ Since $|\psi'(Y-\langle X,\theta\rangle)|\le L_{\psi},$ by Lemma 1 in \cite{mei:2016}, $\langle \nabla^2 \rho(Y-\langle X,\theta\rangle) \nu,\nu\rangle$ is $C_2\tau^2$-sub-exponential. Third, let $H=||\nabla^2 R(\theta_0)||_{op}$ and
$J^*= \mathbf{E}\left[\underset{\theta_1 \neq \theta_2}{\sup}\frac{||(\psi'(\theta_1)-\psi'(\theta_2))xx^T||_{op}}{||\theta_1-\theta_2||_2}\right].$ Then, we can show $H\le L_{\psi}\tau^2$ and $J^*\le L_{\psi} (p\tau^2)^{3/2}.$ Therefore, there exists a constant $c_h$ such that $H\le \tau^2p^{c_h}$ and $J^*\le \tau^3 p^{c_{h}},$ which verifies the assumption 3 in \cite{mei:2016}. Therefore, the uniform convergency of gradient and Hessian in theorem 1 in \cite{mei:2016} holds for our gross error model.
\qed
\\

\textbf{Proof of Theorem \ref{thm1}}:
Part (a): It is suffice to show that $\langle \theta-\theta_0, \nabla R(\theta)\rangle>0$ for all $||\theta-\theta_0||_2>\eta_0.$
Note by Assumption \ref{assume1}(d), we have $h(z)=\int_{-\infty}^{+\infty}\psi(z+\epsilon)f_0(\epsilon)d\epsilon>0$ as $z>0$ and $h'(0)>0.$
Define $H(s):=\underset{0\le z\le s}{\inf}\frac{h(z)}{z},$ it is easy to see that $H(s)>0$ for all $s>0.$
Then, we have
\begin{eqnarray*}
\langle \theta-\theta_0, \nabla R(\theta)\rangle&=&\vv E \left[\vv E[\psi(z+\epsilon)z|z=\langle \theta_0-\theta,X\rangle]\right]\\
&=&(1-\delta)\vv E[h(\langle \theta-\theta_0,X\rangle)\langle \theta-\theta_0,X\rangle]+\delta \vv E \left[\vv E_{g}(\psi(z+\epsilon)z|z=\langle \theta_0-\theta,X\rangle)\right]\\
&\ge& (1-\delta)H(s) \vv E[\langle \theta-\theta_0,X\rangle^2 I_{(|\langle \theta-\theta_0,X\rangle|\le s)}]-\delta L_{\psi}\vv E |\langle \theta_0-\theta,X\rangle|\\
&=&(1-\delta)H(s)\vv E[\langle \theta-\theta_0,X\rangle^2- \langle \theta-\theta_0,X\rangle^2I_{(|\langle \theta-\theta_0,X\rangle|> s)}]-\delta L_{\psi} \vv E|\langle \theta-\theta_0,X\rangle|\\
&\ge&(1-\delta)H(s)\left[\vv E[\langle \theta-\theta_0,X\rangle^2]-\left(\vv E[\langle \theta-\theta_0,X\rangle^4]\cdot\vv P(|\langle \theta-\theta_0,X\rangle|> s)\right)^{1/2}\right]\\
&&-\delta L_{\psi}(\vv E|\langle \theta-\theta_0,X\rangle|^2)^{1/2}\\
&\overset{\text{(i)}}{\ge}& (1-\delta) H(s) ||\theta-\theta_0||_2^2\tau^2 \left(\gamma-\sqrt{c_2 \vv P(|\langle \theta-\theta_0,X\rangle|> s)}\right)-\delta L_{\psi} ||\theta-\theta_0||_2 \tau\\
&\overset{\text{(ii)}}{\ge}& (1-\delta)H(s) ||\theta-\theta_0||_2^2\tau^2 \left(\gamma-\sqrt{\frac{c_2 \vv E(|\langle \theta-\theta_0,X\rangle|^4)}{s^4}}\right)-\delta L_{\psi} ||\theta-\theta_0||_2\tau\\
&\ge& (1-\delta)H(s) ||\theta-\theta_0||_2^2\tau^2 \left(\gamma-\sqrt{\frac{c_2\cdot c_2 \tau^4 ||\theta-\theta_0||_2^4}{s^4}}\right)-\delta L_{\psi} ||\theta-\theta_0||_2\tau\\
&\ge& (1-\delta)H(s) ||\theta-\theta_0||_2^2\tau^2 \left(\gamma-\frac{c_2 \tau^2 ||\theta-\theta_0||_2^2}{s^2}\right)-\delta L_{\psi} ||\theta-\theta_0||_2\tau\\
&\ge& (1-\delta)H(s) ||\theta-\theta_0||_2^2\tau^2 \left(\gamma-\frac{16c_2 \tau^2 r^2}{9s^2}\right)-\delta L_{\psi} ||\theta-\theta_0||_2\tau.
\end{eqnarray*}
Here (i) holds from the fact that if $X$ has mean zero and is $\tau^2$-sub-Gaussian, then for all $u\in \mathbb{R}^p,$
\begin{eqnarray*}
\vv E|\langle u, X\rangle|^2&\le& ||u||^2_2\tau^2,\\
\vv E|\langle u, X\rangle|^4&\le& c_2 ||u||^4_2\tau^4,
\end{eqnarray*}
where $c_2$ is a constant \citep{boucheron:2013}. (ii) holds from Chebyshev's inequality.
Thus, a choice of $\tilde{s}=\frac{8\tau r}{3}\sqrt{\frac{c_2}{\gamma}}$ will ensure that
\begin{equation}\label{eq3}
\langle \theta-\theta_0, \nabla R(\theta)\rangle\ge (1-\delta)\frac{3}{4}H(\frac{8\tau r}{3}\sqrt{\frac{c_2}{\gamma}})||\theta-\theta_0||_2^2\tau^2 \gamma-\delta L_{\psi}||\theta-\theta_0||_2\tau ,
\end{equation}
which is greater than $0$ when
\begin{eqnarray}\label{eq4}
||\theta-\theta_0||_2>\frac{\delta L_{\psi}}{(1-\delta)\frac{3}{4}H(\frac{8\tau r}{3}\sqrt{\frac{c_2}{\gamma}})\tau\gamma}:=\eta_0.
\end{eqnarray}
Therefore, there are no stationary point outside of the ball $B_2^{p}(\theta_0,\eta_0).$

Part(b): We first look at the minimum eigenvalue of the Hessian $\nabla^2R(\theta)$ at $\theta=\theta_0.$ For any $u\in \mathbb{R}^{p}, ||u||_2=1,$
\begin{eqnarray*}
\langle u, \nabla^2R(\theta_0)u\rangle &=& (1-\delta)\vv E_{f_0} [\psi'(\epsilon)\langle X,u\rangle^2]+\delta \vv E_{g} [\psi'(\epsilon)\langle X,u\rangle^2]\\
&=& (1-\delta)\vv E_{f_0} [\psi'(\epsilon)]\vv E[\langle X,u\rangle^2] +\delta \vv E_{g} [\psi'(\epsilon)\langle X,u\rangle^2]\\
&\ge&  (1-\delta)h'(0)\gamma \tau^2-\delta L_{\psi} \tau^2.
\end{eqnarray*}
Therefore,  we have the minimum eigenvalue of $\nabla^2R(\theta_0)$ is greater than $0$ as long as $\delta<\frac{h'(0)\gamma}{h'(0)\gamma+L_{\psi}}.$\\
Then we look at the operator norm of $\nabla^2 R(\theta) -\nabla^2 R(\theta_0).$ For any $u\in \mathbb{R}^{p}, ||u||_2=1,$
\begin{eqnarray*}
|\langle u, (\nabla^2 R(\theta) -\nabla^2 R(\theta_0))u \rangle|&=& |\vv E [(\psi'(\langle X, \theta_0-\theta\rangle+\epsilon)-\psi'(\epsilon))\langle X, u\rangle^2]|\\
&=& |\vv E [\psi''(\xi)\langle X, \theta_0-\theta\rangle\langle X, u\rangle^2]|\\
&\le& \vv E|\psi''(\xi)|\vv E |\langle X, \theta_0-\theta\rangle\langle X, u\rangle^2|\\
&\le& L_{\psi}\{\vv E[\langle X, \theta_0-\theta\rangle^2]\vv E [\langle X, u\rangle^4]\}^{1/2}\\
&\le & L_{\psi}(||\theta_0-\theta||_2^2 \tau^2c_2\tau^4)^{1/2}\\
&=& L_{\psi}\sqrt{c_2}||\theta_0-\theta||_2 \tau^3.
\end{eqnarray*}
Hence, taking
\begin{eqnarray}\label{eq12}
||\theta-\theta_0||_2\le \eta_1:=\frac{ (1-\delta)h'(0)\gamma-\delta L_{\psi}}{2\sqrt{c_2}\tau L_{\psi}}
\end{eqnarray}
guarantees that $(\nabla^2 R(\theta) -\nabla^2 R(\theta_0))_{op}\le \frac{(1-\delta)h'(0)\gamma \tau^2-\delta L_{\psi} \tau^2}{2}.$ Therefore, for all $\theta\in B_2^{p}(\theta_0,\eta_1),$ we have
\begin{equation}\label{eqn02}
\lambda_{\min}(\nabla^2 R(\theta))\ge \kappa :=\frac{(1-\delta)h'(0)\gamma-\delta L_{\psi}}{2}\tau^2,
\end{equation}
which yields there is at most one minimizer of $R(\theta)$ in the ball $B_2^{p}(\theta_0,\eta_1),$  as long as $\delta<\frac{h'(0)\gamma}{h'(0)\gamma+L_{\psi}}.$

Part (c): Note $R(\theta)$ is a continuous function on $B^p_2(r).$ Thus there exists a global minimizer, denoted by $\theta^*.$  Since we have shown that there is no stationary points outside the ball $B_2^p(\theta_0,\eta_0),$  $\theta^*$ should be in the ball $B_2^p(\theta_0,\eta_0).$
Therefore, as long as $\eta_1> \eta_0,$ i.e.,
\begin{eqnarray}
\frac{ (1-\delta)h'(0)\gamma-\delta L_{\psi}}{2\sqrt{c_2}\tau L_{\psi}}>\frac{\delta L_{\psi}}{(1-\delta)\frac{3}{4}H(\frac{8\tau r}{3}\sqrt{\frac{c_2}{\gamma}})\tau\gamma},
\end{eqnarray}
there exists and only exists a unique stationary point of $R(\theta),$ which is also the global optimum $\theta^*.$ \qed

\textbf{Proof of Theorem \ref{thm2}}
Based on Lemma \ref{lemma1}, there exists a constant $C$ such that when $n\ge Cp\log p,$
\begin{eqnarray}
\vv P\left(\underset{\theta\in B^{p}(0,r)}{\sup} ||\nabla \hat{R}_{n}(\theta) -\nabla R(\theta)||_2\le \tau \delta L_{\psi}\right)\ge 1-\pi\\
\vv P\left(\underset{\theta\in B^{p}(0,r)}{\sup} ||\nabla^2 \hat{R}_{n}(\theta) -\nabla^2 R(\theta)||_{op}\le \kappa/2\right)\ge 1-\pi.
\end{eqnarray}
Part (a):
Note
\begin{eqnarray}\label{eq5}
\langle \theta-\theta_0, \nabla \widehat{R}_n(\theta)\rangle&\ge&\langle \theta-\theta_0, \nabla R(\theta)\rangle-||\nabla \hat{R}_{n}(\theta) -\nabla R(\theta)||_{2} ||\theta-\theta_0||_2\\
&\ge&(1-\delta)\frac{3}{4}H(\frac{8\tau r}{3}\sqrt{\frac{c_2}{\gamma}})||\theta-\theta_0||_2^2\tau^2 \gamma-2\tau\delta L_{\psi}||\theta-\theta_0||_2
\end{eqnarray}
which is greater than $0$ when
\begin{eqnarray}\label{eq6}
||\theta-\theta_0||_2>\frac{2\delta L_{\psi}}{(1-\delta)\frac{3}{4}L(\frac{8\tau r}{3}\sqrt{\frac{c_2}{\gamma}})\tau\gamma}=2\eta_0.
\end{eqnarray}
Therefore, there are no stationary points outside of the ball $B_2^{p}(\theta_0,2\eta_0).$

Part (b):
For the least eigenvalue of the empirical Hessian in $B_2^p(\theta_0,\eta_1),$ we have
\begin{eqnarray}\label{eq7}
 \underset{||\theta-\theta_0||_2\le \eta_1}{\inf}\lambda_{\min}(\nabla^2 \widehat{R}_n(\theta))&\ge&  \underset{||\theta-\theta_0||_2\le \eta_1}{\inf}\lambda_{\min}(\nabla^2 R(\theta))-\underset{\theta\in B^{p}(0,\eta_1)}{\sup} ||\nabla^2 \hat{R}_{n}(\theta) -\nabla^2 R(\theta)||_{op} \nonumber\\
 &\ge& \kappa-\kappa/2=\kappa/2>0.
\end{eqnarray}
This lead to the conclusion that, $\widehat{R}_n(\theta)$ is strong convex inside the ball $B_2^{p}(\theta_0,\eta_1).$\\

Part(c):
When $2\eta_0<\eta_1,$ by strong convexity of $\widehat{R}_n(\theta)$ in $B_2^p(\theta_0,\eta_1),$  there exists a unique local minimizer, which is in $B_2^p(\theta_0,2\eta_0).$ We denote the unique local minimizer as $\widehat{\theta}_n.$

By Theorem \ref{thm1}, there is a unique stationary point of the population risk function $R(\theta)$ in the ball $B_2^p(\theta_0, \eta_0).$ Suppose $\theta^*$ is the unique stationary point of $R(\theta).$ By Taylor expansion of $\widehat{R}_n(\theta)$ at the point $\theta^*$, there exists a $\tilde{\theta}$ in $B^p(\theta_0, 2\eta_0),$ such that
\begin{eqnarray}
\widehat{R}_n(\widehat{\theta}_n)=\widehat{R}_n(\theta^*)+\langle \widehat{\theta}_n-\theta^*, \nabla \widehat{R}_n(\theta^*)\rangle+\frac{1}{2}(\widehat{\theta}_n-\theta^*)'\nabla^2 \widehat{R}_n(\tilde{\theta})(\widehat{\theta}_n-\theta^*)\le \widehat{R}_n(\theta^*).
\end{eqnarray}
Since by equation (\ref{eq7}), the least eigenvalue of $\nabla^2 \widehat{R}_n(\tilde{\theta})$ is greater than $\kappa/2,$ which lead to
\begin{eqnarray}
\frac{\kappa}{4} ||\widehat{\theta}_n-\theta^*||^2_2\le \langle \theta^*-\widehat{\theta}_n, \nabla \widehat{R}_n(\theta^*)\rangle \le ||\theta^*-\widehat{\theta}_n||_2 ||\nabla \widehat{R}_n(\theta^*)||_2,
\end{eqnarray}
which yield
\begin{eqnarray}\label{eq9}
||\widehat{\theta}_n-\theta^*||_2\le \frac{4}{\kappa}||\nabla \widehat{R}_n(\theta^*)||_2.
\end{eqnarray}
By Theorem \ref{thm1}, $||\theta_0-\theta^*||_2<\eta_0,$ combined with equation (\ref{eq9}) and the uniform convergency theorem in Lemma \ref{lemma1} yield
\begin{eqnarray}\label{eq10}
||\widehat{\theta}_n-\theta_0||_2\le \eta_0+\frac{4\tau}{\kappa}\sqrt{\frac{C*p\log n}{n}}.
\end{eqnarray}
\qed

\textbf{Proof of lemma \ref{lemma3}:} From the Theorem 3 in \cite{mei:2016}, the uniform convergency theorem of our Lemma \ref{lemma3} holds if Assumption 4, 5 in \cite{mei:2016} hold under the contaminated model with outliers. Here we will show
under our assumption \ref{assume1} and \ref{assume3}, there exist constants $T_0$ and $L_0$ such that
\begin{description}
  \item[a] For all $\theta\in B_2^p(r),$  $Y\in \mathbb{R}, X \in \mathbb{R}^p,$ $||\nabla_{\theta} \rho (Y-\langle X,\theta\rangle)||_{\infty}\le T_0 M$
  \item[b] There exist functions $h_1: \mathbb{R} \times \mathbb{R}^{p+1} \rightarrow \mathbb{R},$ and $h_2: \mathbb{R}^{p+1} \rightarrow \mathbb{R}^p,$ such that
  \begin{eqnarray}
  \langle \nabla_{\theta} \rho(Y-\langle X, \theta \rangle),\theta-\theta_0\rangle=h_1(\langle \theta-\theta_0,h_2(Y,X)\rangle),Y,X).
  \end{eqnarray}
  In addition, $h_1(t,Y,X)$ is $L_0M$- Lipschitz to its first argument $t,$ $h_1(0,Y,X)=0,$ and $h_2(Y,X)$ is mean-zero and $\tau^2$-sub-Gaussian.
\end{description}

Part (a). The gradient of the loss is
\begin{eqnarray}
\nabla_{\theta}\rho(Y-\langle X,\theta\rangle)=-\psi(Y-\langle X,\theta\rangle)X.
\end{eqnarray}
By assumption \ref{assume1}, we have $|-\psi(Y-\langle X,\theta\rangle)|\le L_{\psi}.$ By assumption \ref{assume3}, we have $||X||_{\infty}\le M\tau.$ Therefore, (a) is satisfied with parameter $T_0=L_{\psi}\tau.$\\
Part (b). Note
  \begin{eqnarray}
  \langle \nabla_{\theta} \rho(Y-\langle X, \theta \rangle),\theta-\theta_0\rangle=-\psi(Y-\langle X,\theta\rangle)\langle X, \theta-\theta_0\rangle.
  \end{eqnarray}
We take $h_2(Y,X)=X,$ $t=\langle X, \theta-\theta_0\rangle$ and $h_1(t,Y,X)=-\psi(Y-t-\langle X,\theta_0 \rangle)t.$
Clearly, we have $h_1(0,Y,X)=0$ and $h_2(Y,X)$ is mean $0$ and $\tau^2$-sub-Gaussian. Furthermore, note $|t|\le 2rM\tau,$ we have
\begin{eqnarray}
|\frac{\partial}{\partial t}h_1(t,Y,X)|&=&|\psi'(Y-t-\langle X,\theta_0 \rangle)t-\psi(Y-t-\langle X,\theta_0 \rangle)|\\
&\le& 2M L_{\psi}r\tau+L_{\psi}\\
&\le& (2L_{\psi}r\tau+L_{\psi})M.
\end{eqnarray}
Therefore, $h_1(t,X,Y)$ is at most $(2L_{\psi}r\tau+L_{\psi})M$-Lipschitz in its first argument $t.$
By part (a) and part (b), we can see assumption 4, 5 are satisfied under the gross error model, which prove the uniform convergency theorem in our Lemma \ref{lemma3}.
\qed

\textbf{Proof of theorem \ref{thm3}:} We decompose the proof into four technical lemmas. First, in Lemma \ref{lemma4}, we prove there cannot be any stationary points of the regularized empirical risk $\hat{L}_n$ in (\ref{form3}) outside the region $\mathbb{A},$ which is a cone with $\mathbb{A}=\{\theta_0+\Delta: ||\Delta_{S_0^c}||_1\le 3||\Delta_{S_0}||_1\}.$ Then in Lemma \ref{lemma5}, we show there cannot be any stationary points outside the region $B_2^p(\theta_0,r_s)$ where $r_s$ is the statistical radius which is not less than $\eta_0$ in Theorem \ref{thm1}.  In Lemma \ref{lemma6}, we argue that all stationary points should have support size less or equal to $cs_0\log p.$ Finally, in Lemma \ref{lemma7}, we show there cannot be two stationary points in $B_2^p(\theta_0,\eta_1)\cap \mathbb{A}.$ Note $\hat{L}_n(\theta)$ is a continuous function, which indicates the existence of the global minimizer. Therefore, we can conclude there is and only is one unique stationary point of the regularized empirical risk $\hat{L}_n$ as long as $r_s<\eta_1.$

To start with those lemmas, we define the subgradient of $\hat{L}_n$ at $\theta$ as:
\begin{eqnarray}
\partial \hat{L}_n(\theta)=\left\{\nabla R_n(\theta) +\lambda_n \nu: \nu \in \partial ||\theta||_1\right\}.
\end{eqnarray}
Therefore, the optimality condition implies that $\theta$ is a stationary point of $\hat{L}_n$ if and only if $\mathbf{0}\in \partial \hat{L}_n(\theta).$
To simplify notations, all constants in the following lemmas are dependent on $(\rho, L_{\psi},\tau^2,r,\gamma, \pi)$ but independent on $\delta,s_0,n,p,M.$
\begin{lemma}\label{lemma4}
Let $S_0=supp(\theta_0)$ and $s_0=|S_0|.$ Define a cone $\mathbb{A}=\{\theta_0+\Delta: ||\Delta_{S_0^c}||_1\le 3||\Delta_{S_0}||_1\}\subseteq \mathbb{R}^p.$ For any $\pi>0,$ there exist constants $C_0,$ $C_1$ such that letting $\lambda_n\ge C_0M\sqrt{\frac{\log p}{n}}+\delta \frac{C_1}{\sqrt{s_0}},$ with probability at least $1-\pi,$ $\hat{L}_n(\theta)$ has no stationary points in $B_2^p(0,r)\cap \mathbb{A}^c:$
\begin{eqnarray}
  \langle z(\theta),\theta-\theta_0\rangle> 0, \quad \forall \theta\in B_2^p(0,r)\cap \mathbb{A}^c, z(\theta)\in \partial \hat{L}_n(\theta)
\end{eqnarray}

%
\end{lemma}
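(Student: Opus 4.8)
The plan is to fix any $\theta=\theta_0+\Delta\in B_2^p(r)\cap\mathbb{A}^c$ together with an arbitrary subgradient $z(\theta)=\nabla R_n(\theta)+\lambda_n\nu$, $\nu\in\partial\|\theta\|_1$, and to prove directly that the directional quantity
\[
\langle z(\theta),\Delta\rangle=\langle\nabla R_n(\theta),\Delta\rangle+\lambda_n\langle\nu,\Delta\rangle
\]
is strictly positive. Since $\mathbf{0}\in\partial\hat{L}_n(\theta)$ is the optimality condition, showing positivity for \emph{every} admissible $\nu$ excludes $\mathbf{0}$ from the subdifferential and hence rules out stationary points in $\mathbb{A}^c$. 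I would first dispose of the penalty term, which is where the cone enters. Using $\theta_{0,j}=0$ for $j\in S_0^c$, together with $|\nu_j|\le 1$ and $\nu_j=\mathrm{sign}(\theta_j)$ on the support of $\theta$, one obtains $\langle\nu,\Delta\rangle\ge\|\Delta_{S_0^c}\|_1-\|\Delta_{S_0}\|_1$. On $\mathbb{A}^c$ the defining inequality $\|\Delta_{S_0^c}\|_1>3\|\Delta_{S_0}\|_1$ upgrades this to $\langle\nu,\Delta\rangle\ge\tfrac12\|\Delta\|_1$, so the penalty contributes at least $\tfrac{\lambda_n}{2}\|\Delta\|_1$.

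For the loss term I would split $\langle\nabla R_n(\theta),\Delta\rangle\ge\langle\nabla R(\theta),\Delta\rangle-|\langle\nabla R_n(\theta)-\nabla R(\theta),\Delta\rangle|$ and handle the two pieces with results already established. The population piece is controlled by the key inequality proved in Theorem~\ref{thm1} (inequality~(\ref{eq3})), which gives the quadratic-growth lower bound
\[
\langle\nabla R(\theta),\Delta\rangle\ge (1-\delta)\tfrac34 H(\tilde{s})\gamma\tau^2\|\Delta\|_2^2-\delta L_\psi\tau\|\Delta\|_2 ,
\]
valid for every $\theta$; note that this part of the Theorem~\ref{thm1} argument only used the moment bounds from Assumption~\ref{assume1} and never $p<n$, so it transfers verbatim. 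The fluctuation piece is exactly what Lemma~\ref{lemma3}(a) bounds: on the stated high-probability event, $|\langle\nabla R_n(\theta)-\nabla R(\theta),\Delta\rangle|\le (T_0+L_0\tau)\sqrt{C_1\log(np)/n}\,\|\Delta\|_1$ uniformly over $\theta\in B_2^p(r)$, and the Lipschitz constant $L_0M$ obtained in the proof of Lemma~\ref{lemma3} is precisely what injects the factor $M$ into the rate.

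Collecting the three contributions gives
\[
\langle z(\theta),\Delta\rangle\ge (1-\delta)\tfrac34 H(\tilde{s})\gamma\tau^2\|\Delta\|_2^2-\delta L_\psi\tau\|\Delta\|_2+\Bigl(\tfrac{\lambda_n}{2}-(T_0+L_0\tau)\sqrt{\tfrac{C_1\log(np)}{n}}\Bigr)\|\Delta\|_1 ,
\]
and the proof concludes by taking $\lambda_n$ large enough that the bracketed coefficient absorbs the negative contamination term; identifying $C_0M\sqrt{\log p/n}$ with twice the deviation rate (using $\log(np)\asymp\log p$ when $p\gg n$) and the remaining $\delta$-term with the contamination level yields the stated threshold. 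I expect the balance between the contamination bias $\delta L_\psi\tau\|\Delta\|_2$ and the penalty margin $\tfrac12\lambda_n\|\Delta\|_1$ to be the delicate step, and the crux of the whole lemma. The adversarial part of the gradient is naturally controlled only against $\|\Delta\|_2$, whereas the usable penalty margin lives in $\|\Delta\|_1$; for directions that are nearly sparse off the support, where $\|\Delta\|_1\approx\|\Delta\|_2$, these cannot be freely traded, so one must retain the quadratic-growth term to beat the contamination for large $\|\Delta\|_2$ and calibrate the $\delta$-dependence of $\lambda_n$ for the remaining small-$\|\Delta\|_2$ directions. Getting this interplay right, rather than any single estimate, is what ultimately dictates both the $M\sqrt{\log p/n}$ and the $\delta/\sqrt{s_0}$ pieces of the prescribed $\lambda_n$.
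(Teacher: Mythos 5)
Your setup coincides with the paper's own proof step for step: the same three-term decomposition of $\langle z(\theta),\theta-\theta_0\rangle$, the same quadratic-growth bound (\ref{eq3}) inherited from Theorem \ref{thm1}, the same uniform deviation bound from Lemma \ref{lemma3}(a), and the same subgradient inequality $\langle\nu,\Delta\rangle\ge\|\Delta_{S_0^c}\|_1-\|\Delta_{S_0}\|_1$ (your upgrade to $\tfrac12\|\Delta\|_1$ on $\mathbb{A}^c$ is correct). The problem is that you stop exactly at the step you yourself identify as the crux, and with the bounds you have assembled that step cannot be closed. After setting $\lambda_n\ge 2C_\pi M\sqrt{\log p/n}+C_1\delta/\sqrt{s_0}$, the leftover penalty margin is $\tfrac{C_1\delta}{2\sqrt{s_0}}\|\Delta\|_1$, and it must absorb the bias $\delta L_\psi\tau\|\Delta\|_2$. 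Take $\Delta$ supported on a single coordinate of $S_0^c$ (such $\theta$ lie in $\mathbb{A}^c$) with $\|\Delta\|_2\lesssim\delta$: then $\|\Delta\|_1=\|\Delta\|_2$, the margin is $\tfrac{C_1\delta}{2\sqrt{s_0}}\|\Delta\|_2$, the bias is $\delta L_\psi\tau\|\Delta\|_2$, and the quadratic term $(1-\delta)\tfrac34 H\gamma\tau^2\|\Delta\|_2^2$ is of smaller order. Since $C_1$ must be independent of $s_0$, your lower bound on $\langle z(\theta),\Delta\rangle$ is negative in this regime for $s_0$ large, so the proposed trade-off between the quadratic term (large $\|\Delta\|_2$) and a recalibrated $\lambda_n$ (small $\|\Delta\|_2$) does not go through as described. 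This is a genuine gap, not a detail: the entire content of the lemma beyond the shared preliminaries is precisely how the $\ell_1$ margin beats an $\ell_2$-scaled bias with only a $\delta/\sqrt{s_0}$ penalty level.

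What the paper does at this point is different in kind: it lower-bounds the margin $\|\Delta_{S_0^c}\|_1-\|\Delta_{S_0}\|_1$ over $\mathbb{A}^c$ by solving a constrained extremal problem, claiming the minimum is attained when $\|\Delta_{S_0^c}\|_1=3\|\Delta_{S_0}\|_1$ and the Cauchy--Schwarz relation $\|\Delta\|_2^2=\frac{\|\Delta_{S_0^c}\|_1^2}{p-s_0}+\frac{\|\Delta_{S_0}\|_1^2}{s_0}$ holds with equality, which yields $\|\Delta_{S_0^c}\|_1-\|\Delta_{S_0}\|_1\ge 2\sqrt{\tfrac{(p-s_0)s_0}{8s_0+p}}\|\Delta\|_2$ --- a margin of order $\sqrt{s_0}\,\|\Delta\|_2$ when $p\gg s_0$. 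Multiplied by $C_2=\tfrac{L_\psi\tau}{2}\tfrac{\delta}{\sqrt{s_0}}$, this is exactly what cancels $\delta L_\psi\tau\|\Delta\|_2$, and it is the sole source of the $1/\sqrt{s_0}$ factor in the prescribed $\lambda_n$. You should be aware, however, that your instinct about the delicacy here is well founded: the Cauchy--Schwarz constraint is one-sided (an upper bound on $\ell_1$ by $\ell_2$, never a lower bound), so your one-sparse off-support directions satisfy all the constraints defining $\mathbb{A}^c$ while achieving margin only $\|\Delta\|_2$, far below $2\sqrt{s_0}\|\Delta\|_2$ --- i.e., your $\tfrac12\|\Delta\|_1$ bound is the sharp pointwise one, and the paper's extremal computation is where any careful reader should focus scrutiny. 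But as a proof of the lemma as stated, your proposal is incomplete: the concluding absorption is asserted as a plan rather than proven, and no argument you give produces the $\sqrt{s_0}$ gain that the stated choice of $\lambda_n$ requires.
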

\begin{proof}
For any $z(\theta)\in \partial \hat{L}_n(\theta),$ it can be written as $z(\theta)=\nabla \hat{R}_n(\theta)+\lambda_n \nu (\theta),$ where $\nu(\theta)\in \partial||\theta||_1.$ Therefore, we have
\begin{eqnarray}\label{eq2}
 \langle z(\theta),\theta-\theta_0\rangle= \langle \nabla R(\theta),\theta-\theta_0\rangle+ \langle \nabla \hat{R}_n(\theta)-\nabla R(\theta),\theta-\theta_0\rangle+\lambda_n \langle \nu(\theta),\theta-\theta_0\rangle
\end{eqnarray}
Note by (\ref{eq3}) we have
\begin{equation}\label{eq13}
\langle \theta-\theta_0, \nabla R(\theta)\rangle\ge (1-\delta)\frac{3}{4}H(\frac{8\tau r}{3}\sqrt{\frac{c_2}{\gamma}})||\theta-\theta_0||_2^2\tau^2 \gamma-\delta L_{\psi}||\theta-\theta_0||_2\tau.
\end{equation}
By lemma \ref{lemma3}, for any $\pi>0,$ there exists a constant $C_{\pi}$ such that
\begin{eqnarray}\label{eq14}
\vv P(\underset{0<||\theta||_2<r}{\sup}\frac{|\langle \nabla \hat{R}_n(\theta)-\nabla R(\theta),\theta-\theta_0\rangle|}{||\theta-\theta_0||_1}\le C_{\pi}M\sqrt{\frac{\log p}{n}})>1-\pi.
\end{eqnarray}

Letting $\Delta=\theta-\theta_0,$ we have
\begin{eqnarray}\label{eq15}
 \langle \nu(\theta),\theta-\theta_0\rangle=  \langle \nu(\theta)_{S_0^c},\Delta_{S_0^c}\rangle+\langle \nu(\theta)_{S_0},\Delta_{S_0}\rangle\ge ||\Delta_{S_0^c}||_1-||\Delta_{S_0}||_1
\end{eqnarray}
Plugging (\ref{eq13}),(\ref{eq14}),(\ref{eq15}) into (\ref{eq2}) yields
\begin{eqnarray}
 \langle z(\theta),\theta-\theta_0\rangle&\ge& (1-\delta)\frac{3}{4}H(\frac{8\tau r}{3}\sqrt{\frac{c_2}{\gamma}})||\theta-\theta_0||_2^2\tau^2 \gamma-\delta L_{\psi}||\theta-\theta_0||_2\tau\\
 &-&C_{\pi}M\sqrt{\frac{\log p}{n}}(||\Delta_{S_0^c}||_1+||\Delta_{S_0}||_1)+\lambda_n(||\Delta_{S_0^c}||_1-||\Delta_{S_0}||_1).
\end{eqnarray}
Let $\lambda_n\ge 2C_{\pi}M\sqrt{\frac{\log p}{n}}+C_2,$ we have
\begin{eqnarray}\label{eq8}
 \langle z(\theta),\theta-\theta_0\rangle&\ge& (1-\delta)\frac{3}{4}H(\frac{8\tau r}{3}\sqrt{\frac{c_2}{\gamma}})||\theta-\theta_0||_2^2\tau^2 \gamma-\delta L_{\psi}||\theta-\theta_0||_2\tau \nonumber\\
 &+&C_{\pi}M\sqrt{\frac{\log p}{n}}(||\Delta_{S_0^c}||_1-3||\Delta_{S_0}||_1)+C_2(||\Delta_{S_0^c}||_1-||\Delta_{S_0}||_1).
\end{eqnarray}
Next, we will find the lower bound of $||\Delta_{S_0^c}||_1-||\Delta_{S_0}||_1$  under the constraint of $||\Delta_{S_0^c}||_1-3||\Delta_{S_0}||_1\ge0.$
 Note by Cauchy inequality, we have
 \begin{eqnarray}
 ||\Delta||^2_2\ge \frac{||\Delta_{S_0^c}||^2_1}{p-s_0}+\frac{||\Delta_{S_0}||^2_1}{s_0}
 \end{eqnarray}
 Therefore, under the constraint of $||\Delta_{S_0^c}||_1-3||\Delta_{S_0}||_1\ge0,$ the minimal value of $||\Delta_{S_0^c}||_1-||\Delta_{S_0}||_1$ is obtained when $||\Delta_{S_0^c}||_1-3||\Delta_{S_0}||_1=0$ and $||\Delta||^2_2=\frac{||\Delta_{S_0^c}||^2_1}{p-s_0}+\frac{||\Delta_{S_0}||^2_1}{s_0}.$ By solving the two equations yield
 \begin{eqnarray}
 ||\Delta_{S_0^c}||_1&=&3\sqrt{\frac{(p-s_0)s_0}{8s_0+p}}||\Delta||_2\\
  ||\Delta_{S_0}||_1&=&\sqrt{\frac{(p-s_0)s_0}{8s_0+p}}||\Delta||_2
 \end{eqnarray}
 and $||\Delta_{S_0^c}||_1-||\Delta_{S_0}||_1\ge 2\sqrt{\frac{(p-s_0)s_0}{8s_0+p}}||\Delta||_2.$
 Combined with (\ref{eq8}), setting $C_1=\frac{L_{\psi}\tau}{2}$ and $C_2=C_1\frac{\delta}{\sqrt{s_0}}$ yield $2\sqrt{\frac{(p-s_0)s_0}{8s_0+p}}C_2\ge \delta L_{\psi}\tau,$  which implies $\langle z(\theta),\theta-\theta_0\rangle>0,$ as long as $\theta\in \mathbb{A}^c,$ i.e., $||\Delta_{S_0^c}||_1-3||\Delta_{S_0}||_1>0.$
\end{proof}

\begin{lemma}\label{lemma5}
For any $\pi>0,$ $\theta\in \mathbb{A}, z(\theta)\in \partial \hat{L}_n(\theta),$ there exist constants $C_0,$ $C_1$ such that with probability at least $1-\pi,$
  \begin{eqnarray}
  \langle z(\theta),\theta-\theta_0\rangle> 0
  \end{eqnarray}
as long as $||\theta-\theta_0||_2>r_s,$ where
\begin{eqnarray}\label{rs}
r_s=\frac{\delta}{1-\delta}C_0+\frac{4\sqrt{s_0}}{1-\delta}(M\sqrt{\frac{\log p}{n}}+\lambda_n)C_1.
\end{eqnarray}
\end{lemma}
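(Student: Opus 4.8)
The plan is to recycle the three-term decomposition of the directional subgradient used in Lemma~\ref{lemma4}, but this time to exploit the cone membership $\theta\in\mathbb{A}$ to trade $\ell_1$ quantities for $\ell_2$ quantities. Writing $z(\theta)=\nabla\hat{R}_n(\theta)+\lambda_n\nu(\theta)$ with $\nu(\theta)\in\partial\|\theta\|_1$ and $\Delta=\theta-\theta_0$, I would begin from the identity (\ref{eq2}),
\begin{eqnarray}
\langle z(\theta),\Delta\rangle=\langle\nabla R(\theta),\Delta\rangle+\langle\nabla\hat{R}_n(\theta)-\nabla R(\theta),\Delta\rangle+\lambda_n\langle\nu(\theta),\Delta\rangle,\nonumber
\end{eqnarray}
and lower bound each of the three summands separately.

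For the population term I would invoke (\ref{eq13}), which gives $\langle\nabla R(\theta),\Delta\rangle\ge a\|\Delta\|_2^2-\delta L_\psi\tau\|\Delta\|_2$ with $a=(1-\delta)\tfrac34 H(\tfrac{8\tau r}{3}\sqrt{c_2/\gamma})\tau^2\gamma$; the factor $(1-\delta)$ inside $a$ is exactly what produces the $1/(1-\delta)$ scaling in $r_s$. For the empirical deviation term I would use Lemma~\ref{lemma3}(a) to obtain $|\langle\nabla\hat{R}_n(\theta)-\nabla R(\theta),\Delta\rangle|\le C_\pi M\sqrt{\log p/n}\,\|\Delta\|_1$, and for the penalty term the subgradient inequality (\ref{eq15}), namely $\langle\nu(\theta),\Delta\rangle\ge\|\Delta_{S_0^c}\|_1-\|\Delta_{S_0}\|_1\ge-\|\Delta_{S_0}\|_1$.

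The crucial step is to convert these $\ell_1$ bounds back into $\|\Delta\|_2$ using $\theta\in\mathbb{A}$. Since $\|\Delta_{S_0^c}\|_1\le3\|\Delta_{S_0}\|_1$ on the cone, I have $\|\Delta\|_1\le4\|\Delta_{S_0}\|_1\le4\sqrt{s_0}\|\Delta_{S_0}\|_2\le4\sqrt{s_0}\|\Delta\|_2$ and likewise $\|\Delta_{S_0}\|_1\le\sqrt{s_0}\|\Delta\|_2$. Substituting everything back and writing $t=\|\Delta\|_2$ yields a one-variable lower bound of the form
\begin{eqnarray}
\langle z(\theta),\Delta\rangle\ge a\,t^2-\Big(\delta L_\psi\tau+4C_\pi M\sqrt{\tfrac{\log p}{n}}\sqrt{s_0}+\lambda_n\sqrt{s_0}\Big)t,\nonumber
\end{eqnarray}
which is strictly positive once $t$ exceeds the bracketed factor divided by $a$. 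Dividing by $a$ and absorbing constants, the threshold is precisely $r_s=\frac{\delta}{1-\delta}C_0+\frac{4\sqrt{s_0}}{1-\delta}(M\sqrt{\log p/n}+\lambda_n)C_1$, with $C_0=\frac{L_\psi\tau}{a_0}$ and $C_1=\frac{\max(C_\pi,1)}{a_0}$ where $a_0=a/(1-\delta)$ is $\delta$-free, giving $\langle z(\theta),\Delta\rangle>0$ whenever $\|\theta-\theta_0\|_2>r_s$.

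I expect the main obstacle to be essentially bookkeeping rather than conceptual: the genuinely essential ingredient is the cone inequality $\|\Delta\|_1\le4\sqrt{s_0}\|\Delta\|_2$, because without the restriction $\theta\in\mathbb{A}$ the $\ell_1$ error and penalty terms cannot be dominated by the quadratic $a\,t^2$ and the argument breaks down — this is exactly why Lemma~\ref{lemma4}, ruling out stationary points off the cone, must be established first. The remaining care is to match the loose constants generated above to the clean $1/(1-\delta)$ form of $r_s$ and to confirm $r_s\ge\eta_0$, which holds because the first summand of $r_s$ coincides with the $\eta_0$ of Theorem~\ref{thm1}.
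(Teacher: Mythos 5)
Your proposal is correct and follows essentially the same route as the paper's proof: the same three-term decomposition from (\ref{eq2}), the population bound (\ref{eq13}), the uniform-deviation bound from Lemma~\ref{lemma3}(a), and the cone inequality $\|\Delta\|_1\le 4\sqrt{s_0}\|\Delta\|_2$ to reduce everything to a one-variable quadratic in $\|\Delta\|_2$, with the same resulting threshold $r_s$ and essentially the same constants $C_0$, $C_1$. Your only deviation is cosmetic --- you bound the penalty term by $\langle\nu(\theta),\Delta\rangle\ge-\|\Delta_{S_0}\|_1\ge-\sqrt{s_0}\|\Delta\|_2$, slightly sharper than the paper's cruder $-\lambda_n\|\Delta\|_1\ge-4\lambda_n\sqrt{s_0}\|\Delta\|_2$, which is absorbed into the constants anyway --- and your closing observations (the role of Lemma~\ref{lemma4} in justifying the restriction to $\mathbb{A}$, and that the first summand of $r_s$ equals $\eta_0$) match the paper's logic.
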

 \begin{proof}
 Since for any $\theta\in \mathbb{A},$ we have $||\theta-\theta_0||_1\le 4\sqrt{s_0}||\theta-\theta_0||_2.$ Combining with (\ref{eq2}) yields
 \begin{eqnarray}\label{eq16}
 \langle z(\theta),\theta-\theta_0\rangle&\ge& \langle \nabla R(\theta),\theta-\theta_0\rangle-C_{\pi}M\sqrt{\frac{\log p}{n}}||\theta-\theta_0||_1-\lambda_n ||\theta-\theta_1||_1\\
 &\ge& (1-\delta)\frac{3}{4}H(\frac{8\tau r}{3}\sqrt{\frac{c_2}{\gamma}})||\theta-\theta_0||_2^2\tau^2 \gamma-\delta L_{\psi}||\theta-\theta_0||_2\tau\\
 &&-(C_{\pi}M\sqrt{\frac{\log p}{n}}+\lambda_n)4\sqrt{s_0}||\theta-\theta_0||_2,
\end{eqnarray}
which is greater than $0$ as long as
\begin{eqnarray}\label{eqn20}
||\theta-\theta_0||_2\ge \frac{\delta L_{\psi}+(C_{\pi}M\sqrt{\frac{\log p}{n}}+\lambda_n)4\sqrt{s_0}}{(1-\delta)\frac{3}{4}H(\frac{8\tau r}{3}\sqrt{\frac{c_2}{\gamma}})\tau\gamma}:=r_s.
\end{eqnarray}
 Taking $C_0=\frac{L_{\psi}}{\frac{3}{4}H(\frac{8\tau r}{3}\sqrt{\frac{c_2}{\gamma}})\tau\gamma}$ and $C_1=\frac{\max(1,C_{\pi})}{\frac{3}{4}H(\frac{8\tau r}{3}\sqrt{\frac{c_2}{\gamma}})\tau\gamma}$ give the result of $r_s$ in equation (\ref{rs}).
 \end{proof}
 \begin{lemma}\label{lemma6}
 If $\delta\le 1/2,$ for any $\pi,$ there exist constants $C_0, C_1, C$ such that letting $\lambda_n\ge C_0M\sqrt{(\log p)/n}+\delta C_1/\sqrt{s_0},$ with probability at least $(1-\pi),$ any stationary points of $\hat{L}_n(\theta)$ in $ B_2^p(\theta_0,r_s)\cap \mathbb{A}$ has support size $|S(\hat{\theta})|\le C s_0\log p.$
 \end{lemma}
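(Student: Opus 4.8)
The plan is to exploit the first-order stationarity condition coordinate-by-coordinate on the support $S:=S(\hat\theta)$ of a stationary point $\hat\theta\in B_2^p(\theta_0,r_s)\cap\mathbb{A}$, and to show that the number of coordinates on which $|\nabla_j\hat R_n(\hat\theta)|$ equals $\lambda_n$ cannot exceed $Cs_0$. Writing $\Delta=\hat\theta-\theta_0$ and recalling that $\mathbf 0\in\partial\hat L_n(\hat\theta)$ forces $\nabla_j\hat R_n(\hat\theta)=-\lambda_n\,\mathrm{sign}(\hat\theta_j)$ for every $j\in S$, I would start from the identity
\[
|S|\,\lambda_n^2 \;=\; \|\nabla_S\hat R_n(\hat\theta)\|_2^2 .
\]
The entire argument is then to produce an upper bound for the right-hand side that grows strictly more slowly than $|S|\lambda_n^2$, so that $|S|$ can be solved for. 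To this end I would decompose $\nabla_S\hat R_n(\hat\theta)=\nabla_S\hat R_n(\theta_0)+\big(\nabla_S\hat R_n(\hat\theta)-\nabla_S\hat R_n(\theta_0)\big)$ and bound the two pieces separately.

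For the term at $\theta_0$, the delicate point is the population bias $\nabla R(\theta_0)=-\delta\,\mathbf E_g[\psi(\epsilon)X]$ produced by the contamination: a coordinatewise treatment only gives $O(\delta)$ per coordinate, hence $O(\delta\sqrt{|S|})$ after restriction to $S$, which is too crude. Instead I would bound its full Euclidean norm in a dimension-free way, $\|\nabla R(\theta_0)\|_2=\delta\sup_{\|u\|_2=1}\mathbf E[\phi(X)\langle X,u\rangle]\le\delta L_\psi\tau$, where $\phi(X):=\mathbf E_g[\psi(\epsilon)\mid X]$ satisfies $|\phi|\le L_\psi$ and the last step uses the sub-Gaussianity of $X$; the clean contribution vanishes by oddness of $\psi$ and symmetry of $f_0$. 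The mean-zero empirical fluctuation $\nabla\hat R_n(\theta_0)-\nabla R(\theta_0)$ is an average of bounded increments, so a Bernstein/union bound gives $\|\nabla\hat R_n(\theta_0)-\nabla R(\theta_0)\|_\infty\le CL_\psi M\tau\sqrt{(\log p)/n}$, contributing at most $\sqrt{|S|}\,CL_\psi M\tau\sqrt{(\log p)/n}$ in $\ell_2$. Together, $\|\nabla_S\hat R_n(\theta_0)\|_2^2\le 2\delta^2L_\psi^2\tau^2+2|S|C^2L_\psi^2M^2\tau^2\frac{\log p}{n}$.

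For the increment I would use $|\psi'|\le L_\psi$ (Assumption \ref{assume1}(a)) to write $\nabla_j\hat R_n(\hat\theta)-\nabla_j\hat R_n(\theta_0)=-\frac1n\sum_i w_i x_{ij}$ with $|w_i|\le L_\psi|\langle\Delta,x_i\rangle|$, so that $\|\nabla_S\hat R_n(\hat\theta)-\nabla_S\hat R_n(\theta_0)\|_2\le\frac{L_\psi}{n}\|X_S\|_{op}\|X\Delta\|_2$. Here I would invoke (i) an upper restricted-eigenvalue bound, which inside the cone $\mathbb{A}$ (where $\|\Delta\|_1\le4\sqrt{s_0}\|\Delta\|_2$) gives $\frac1n\|X\Delta\|_2^2\le C\|\Delta\|_2^2\le Cr_s^2$ once $n\gtrsim s_0\log p$, and (ii) a uniform sparse operator-norm bound $\|X_S\|_{op}^2\le Cn$, producing an increment bounded by $CL_\psi r_s$. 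Plugging both pieces into the identity and choosing $C_0$ large enough that $\lambda_n^2\ge C_0^2M^2\frac{\log p}{n}$ dominates the $|S|\frac{\log p}{n}$ term, I can absorb that term into the left side to get $\tfrac12|S|\lambda_n^2\le 4\delta^2L_\psi^2\tau^2+2CL_\psi^2r_s^2$. The calibrated choice $\lambda_n\ge\delta C_1/\sqrt{s_0}$ then gives $\delta^2/\lambda_n^2\le s_0/C_1^2$, while the expression for $r_s$ in Lemma \ref{lemma5} together with $\lambda_n\gtrsim M\sqrt{(\log p)/n}$ and $\lambda_n\gtrsim\delta/\sqrt{s_0}$ yields $r_s\le C\sqrt{s_0}\,\lambda_n$, hence $r_s^2/\lambda_n^2\le Cs_0$; combining, $|S|\le Cs_0\le Cs_0\log p$.

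The step I expect to be the main obstacle is the sparse operator-norm control in the increment bound, because it is self-referential: $\|X_S\|_{op}^2\le Cn$ holds uniformly only over supports of size $|S|\lesssim n/\log p$, yet $|S|$ is exactly the quantity being estimated. I would resolve this by a dichotomy. If $|S|\le n/\log p$, the bound applies and gives $|S|\le Cs_0$. If instead $|S|>n/\log p$, I would use the weaker uniform estimate $\|X_S\|_{op}^2\le C|S|\log p$; after the same cancellation this forces $n/\log p\le Cr_s^2/\lambda_n^2\le Cs_0$, i.e.\ $n\le Cs_0\log p$, contradicting the standing hypothesis $n\gg s_0\log p$. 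Thus the large-support case is vacuous and the claimed bound holds. A secondary subtlety, worth isolating as its own estimate, is the dimension-free $\ell_2$ bound on the contamination bias, since a naive coordinatewise treatment cannot be reconciled with the $\delta/\sqrt{s_0}$ scaling of $\lambda_n$ that makes the final cancellation work.
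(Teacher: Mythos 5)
Your proposal is correct, and while it shares the paper's skeleton---use stationarity to get $|\nabla_j\hat R_n(\hat\theta)|=\lambda_n$ on $S(\hat\theta)$, square and sum, split the restricted gradient at $\theta_0$, control the increment via a restricted upper-eigenvalue bound over the cone $\mathbb{A}$, and finish with the calibration $r_s\le C\sqrt{s_0}\,\lambda_n$ from Lemma \ref{lemma5}---the two key estimates are handled genuinely differently, and in both places your route is arguably tighter. First, the paper treats the contamination bias coordinatewise, $|\langle\nabla R(\theta_0),e_j\rangle|\le\delta L_\psi\tau$, and folds it into the penalty by demanding $\lambda_n\ge 2L_\psi\tau\bigl(C\sqrt{(\log p)/n}+\delta\bigr)$ before (\ref{con1}); note this is a $\delta$ requirement, not the $\delta/\sqrt{s_0}$ scaling advertised in the lemma statement, so the paper's own proof is inconsistent with its stated choice of $\lambda_n$. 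Your dimension-free bound $\|\nabla R(\theta_0)\|_2\le\delta L_\psi\tau$, kept in $\ell_2$ rather than pushed through $\ell_\infty$, is exactly what reconciles the argument with $\lambda_n\gtrsim \delta/\sqrt{s_0}$, since it contributes $O(\delta^2)$ rather than $O(|S|\delta^2)$ to $\|\nabla_S\hat R_n(\theta_0)\|_2^2$; your flagged ``secondary subtlety'' is thus a genuine repair, not a pedantic one. Second, for the increment the paper uses Minkowski and Cauchy--Schwarz to arrive at the factor $\frac1n\sum_i\|(x_i)_{S(\hat\theta)}\|_2^2\le c_2\tau^2\log p$ imported from \cite{mei:2016} (see (\ref{con3})), which is the source of the extra $\log p$ in its conclusion $|S(\hat\theta)|\le Cs_0\log p$ and which quietly carries the same self-reference you identified, since the bound must hold uniformly over a random support of unknown size; you instead factor through $\|X_S\|_{op}$ via $\|\nabla_S\hat R_n(\hat\theta)-\nabla_S\hat R_n(\theta_0)\|_2\le\frac{L_\psi}{n}\|X_S\|_{op}\|X\Delta\|_2$ and resolve the circularity explicitly with the dichotomy $|S|\le n/\log p$ versus $|S|>n/\log p$, at the cost of invoking the standard uniform sparse-operator-norm bound $\sup_{|T|=k}\|X_T\|_{op}^2\le C(n+k\log p)$, whose two regimes give exactly your two branches. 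What each approach buys: the paper's is shorter given the imported lemma; yours is self-contained at the delicate point, yields the stronger bound $|S(\hat\theta)|\le Cs_0$ in the main branch (which trivially implies the stated $Cs_0\log p$), and makes the $\lambda_n$ calibration internally consistent. Both proofs share the same benign omission of the normal-cone term from the constraint $\|\theta\|_2\le r$, implicitly treating stationary points as interior.
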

 \begin{proof}
 Let $\hat{\theta}\in B_2^p(\theta_0,r_s)\cap \mathbb{A}$ be a stationary point of $\hat{L}_n(\theta)$ in (\ref{form3}).  Then we have
 \begin{eqnarray}
 \nabla R_n(\hat{\theta})+\lambda_n\nu(\hat{\theta})=0,
 \end{eqnarray}
 where $\nu(\hat{\theta})\in ||\hat{\theta}||_1.$ Thus, we have
 \begin{eqnarray}\label{eq17}
 \left(\nabla R_n(\hat{\theta})\right)_j=\pm \lambda_n, \quad \forall j\in S(\hat{\theta})
  \end{eqnarray}
 Note $|\psi(y_i-\langle x_i,\theta_0\rangle)|\le L_{\psi}$ and $\langle x_i,e_j\rangle$ is $\tau^2$-subgaussian with mean $0.$ Then there exists an absolute constant $c_0$ such that   $\psi(y_i-\langle x_i,\theta_0\rangle)\langle x_i,e_j\rangle$ is $c_0L_{\psi}^2\tau^2$-subgaussian, see Lemma 1(d) in \cite{mei:2016}.
 Thus we have  $\frac{1}{n}\sum_{i=1}^{n}\psi(y_i-\langle x_i,\theta_0\rangle)\langle x_i,e_j\rangle$ is $c_0L_{\psi}^2\tau^2/n$-subgaussian with mean $\langle \nabla R(\theta_0), e_j\rangle.$
 Moreover, note $|\langle \nabla R(\theta_0), e_j\rangle|=|\delta \vv E_g \psi(y_i-\langle x_i,\theta_0\rangle)\langle x_i,e_j\rangle|\le \delta L_{\psi}\vv E|\langle x_i,e_j\rangle|\le \delta L_{\psi}\tau,$ we have for any $t>0,$
 \begin{eqnarray}
 &&\vv P(|\frac{1}{n}\sum_{i=1}^{n}\psi(y_i-\langle x_i,\theta_0\rangle)\langle x_i,e_j\rangle|\ge t+\delta L_{\psi}\tau)\nonumber\\
 &\le& \vv P(|\frac{1}{n}\sum_{i=1}^{n}\psi(y_i-\langle x_i,\theta_0\rangle)\langle x_i,e_j\rangle-\langle \nabla R(\theta_0), e_j\rangle|\ge t)
 \le 2 \exp(-\frac{t^2n}{2c_0L^2_{\psi}\tau^2}).
 \end{eqnarray}
 Thus, we can get
 \begin{eqnarray}
 \vv P\left(||\nabla R_n(\theta_0)||_{\infty}> t+\delta L_{\psi}\tau\right)&\le& p \max_{1\le j\le p} \vv P\left(|\frac{1}{n}\sum_{i=1}^{n}\psi(y_i-\langle x_i,\theta_0\rangle)\langle x_i,e_j\rangle| >t+\delta L_{\psi}\tau\right)\nonumber\\
 &\le& 2p \exp(-\frac{t^2n}{2c_0L^2_{\psi}\tau^2}).
 \end{eqnarray}
 Thus, a choice of $t=L_{\psi}\tau\sqrt{\frac{2c_0(\log p+\log 6/\pi)}{n}}$ and $C=\sqrt{c_0\log 6/\pi}$ will guarantee that
 \begin{eqnarray}\label{con1}
 \vv P\left(||\nabla\hat{R}_n(\theta_0)||_{\infty}>  L_{\psi}\tau(C\sqrt{\frac{\log p}{n}}+\delta)\right)&\le \pi/3&
 \end{eqnarray}
 Let $\lambda_n\ge2L_{\psi}\tau(C\sqrt{\frac{\log p}{n}}+\delta),$ we have the event $\left(||\nabla R_n(\theta_0)||_{\infty}<\lambda_n/2\right)$ happens with the probability at least $1-\pi/3.$ Under this event, combing with (\ref{eq17}) yields
 \begin{eqnarray}
 \lambda_n/2\le \left|\left( \nabla R_n(\theta_0)-\nabla R_n(\hat{\theta})\right)_j\right|, \quad \forall j\in S(\hat{\theta}).
 \end{eqnarray}
 Squaring and summing over $j\in S(\hat{\theta}),$ we have
 \begin{eqnarray}
 \lambda_n^2|S(\hat{\theta})|&\le& 4\left\|\left( \nabla\hat{R}_n(\theta_0)-\nabla\hat{R}_n(\hat{\theta})\right)_{S(\hat{\theta})}\right\|_2^2\\
 &=&4\left\|\left(\frac{1}{n}\sum_{i=1}^n\left(\psi(y_i-\langle \theta_0,x_i\rangle)-\psi(y_i-\langle \hat{\theta},x_i \rangle)\right)x_i\right)_{S(\hat{\theta})}\right\|_2^2\\
 &=&4\left\|\left(\frac{1}{n}\sum_{i=1}^n\left(\psi'(y_i-\langle \beta_i,x_i\rangle)\right)\langle \theta_0-\hat{\theta}, x_i\rangle x_i\right)_{S(\hat{\theta})}\right\|_2^2\\
 &\le &4L_{\psi}^2\left\|\left(\frac{1}{n}\sum_{i=1}^n\langle \theta_0-\hat{\theta}, x_i\rangle x_i\right)_{S(\hat{\theta})}\right\|_2^2
 \end{eqnarray}
 where $\beta_i$ are located on the line between $\theta_0$ and $\hat{\theta}$ obtained by intermediate value theorem.
 Moreover, by Minkowski inequality and Cauchy-Schwarz inequality yield
 \begin{eqnarray}\label{eqn01}
 \left\|\left(\frac{1}{n}\sum_{i=1}^n\langle \theta_0-\hat{\theta}, x_i\rangle x_i\right)_{S(\hat{\theta})}\right\|_2&\le&  \frac{1}{n}\sum_{i=1}^n|\langle \theta_0-\hat{\theta}, x_i\rangle| \left\|\left(x_i\right)_{S(\hat{\theta})}\right\|_2 \nonumber\\
 &\le&\frac{1}{n} \left((\sum_{i=1}^n|\langle \theta_0-\hat{\theta}, x_i\rangle| ^2)(\sum_{i=1}^n \|\left(x_i\right)_{S(\hat{\theta})}\|^2_2)\right)^{1/2}
 \end{eqnarray}

 Due to the restricted smoothness property of the sub-Gaussian random variables \cite{mei:2016}, there exists a constant $c_1$ depending on $\pi$ such that with probability at least $1-\pi/3,$ as $n\ge c_1 s_0\log p,$ we have
 \begin{eqnarray}
 \underset{\theta\in \mathbb{A}}{\sup}\frac{\frac{1}{n}(\sum_{i=1}^n|\langle \theta_0-\theta, x_i\rangle| ^2)}{||\theta-\theta_0||^2_2}\le 3\tau^2.
 \end{eqnarray}

%

 Therefore, with probability at least $1-\pi/3,$ we have
 \begin{eqnarray}\label{con2}
  \underset{\theta\in \mathbb{A}\cap B^p(\theta_0,r_s)}{\sup}\frac{1}{n}(\sum_{i=1}^n|\langle \theta_0-\hat{\theta}, x_i\rangle| ^2)\le 3\tau^2 \underset{\theta\in \mathbb{A}\cap B^p(\theta_0,r_s)}{\sup}||\theta-\theta_0||^2_2\le 3\tau^2r_s^2.
 \end{eqnarray}
 Moreover, by Lemma 13 in \cite{mei:2016}, for any $\pi,$ there exists constant $c_2$ depending on $\pi$ such that
 \begin{eqnarray}\label{con3}
 \vv P(\frac{1}{n}\sum_{i=1}^n \|\left(x_i\right)_{S(\hat{\theta})}\|^2_2>c_2\tau^2\log p)\le \pi/3.
 \end{eqnarray}

 By (\ref{con1},\ref{con2},\ref{con3}), as well as (\ref{eqn01}), at least $1-\pi,$
 \begin{eqnarray}
 \lambda_n^2|S(\hat{\theta})|&\le& 4L_{\psi}^23\tau^2r_s^2c_2\tau^2\log p\\
 &=& C r_s^2\log p
 \end{eqnarray}
 By equation (\ref{rs}) we have
 \begin{eqnarray}
 r_s^2\le C_0 (\frac{\delta}{1-\delta})^2+\frac{s_0}{(1-\delta)^2}(M^2\frac{\log p}{n}+\lambda_n^2)C_1
 \end{eqnarray}
 Taking $\lambda_n\ge C_2M\sqrt{(\log p)/n}+C_3\delta/\sqrt{s_0}$ gives us
 \begin{eqnarray}
| S(\hat{\theta})|&\le& (C_4\frac{s_0}{(1-\delta)^2}+s_0C_5)\log p\\
 &=& C s_0\log p
  \end{eqnarray}


 \end{proof}

 \begin{lemma}\label{lemma7}
 For any positive constants $C_0$ and  $\pi,$ letting $r_0=C_0s_0\log p,$ there exist constant $C_1$ such that when $n\ge C_1 s_0\log^2 p,$
 \begin{eqnarray}
 \vv P(\underset{\theta\in B^p_2(\theta_0,r)\cap B^p_0(0,r_0)}{\sup}\underset{\nu\in B^p_2(0,1)\cap B^p_0(0,r_0)}{\sup} \langle \nu,(\nabla^2 \hat{R}_n(\theta)-\nabla^2 R(\theta))\nu \rangle \le \kappa/2)\ge 1-\pi.
 \end{eqnarray}
 Moreover, the regularized empirical risk $\hat{L}_n(\theta)$ in (\ref{form3}) cannot have two stationary points in the region $B^p_2(\theta_0,\eta_1)\cap B^p_0(0,r_0/2).$
 \end{lemma}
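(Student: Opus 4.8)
The plan is to treat the two displayed claims separately: first establish the sparse Hessian concentration with the explicit target $\kappa/2$, and then run a deterministic monotonicity argument that turns this concentration into uniqueness. Throughout I work on the high-probability event furnished by the first claim.

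For the first display I would simply re-run the sparse Hessian uniform-convergence estimate already recorded in Lemma \ref{lemma3}(b), but with the sparsity level $s_0$ replaced by the enlarged level $r_0=C_0 s_0\log p$. The directional Hessian is $\langle \nu,\nabla^2\rho(Y-\langle X,\theta\rangle)\nu\rangle=\psi'(Y-\langle X,\theta\rangle)\langle X,\nu\rangle^2$ with $|\psi'|\le L_\psi$ and $\langle X,\nu\rangle$ sub-Gaussian, so the same covering-plus-union-bound machinery behind Theorem~3 of \cite{mei:2016} applies verbatim over the restricted set $\theta\in B_2^p(\theta_0,r)\cap B_0^p(0,r_0)$ and unit directions $\nu\in B_2^p(0,1)\cap B_0^p(0,r_0)$, yielding an absolute deviation of order $\tau^2\sqrt{r_0\log(np)/n}$. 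Substituting $r_0=C_0 s_0\log p$ and using $\log(np)\lesssim\log p$ (since $n\le p$) gives order $\tau^2\sqrt{s_0\log^2 p/n}$ --- this is exactly where the $\log^2 p$ in the sample requirement originates. Choosing $C_1$ large enough that this quantity drops below $\kappa/2$ whenever $n\ge C_1 s_0\log^2 p$ establishes the first display with probability at least $1-\pi$.

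For the second claim I would argue by contradiction. Suppose $\hat L_n$ has two distinct stationary points $\theta_1,\theta_2\in B_2^p(\theta_0,\eta_1)\cap B_0^p(0,r_0/2)$, and set $\Delta=\theta_2-\theta_1$ and $\nu=\Delta/\|\Delta\|_2$. The essential bookkeeping is that $\mathrm{supp}(\Delta)\subseteq\mathrm{supp}(\theta_1)\cup\mathrm{supp}(\theta_2)$ has size at most $r_0$, so each segment point $\theta_t=\theta_1+t\Delta$ satisfies $\|\theta_t\|_0\le r_0$ and, by convexity, $\theta_t\in B_2^p(\theta_0,\eta_1)\subseteq B_2^p(\theta_0,r)$, while $\nu\in B_2^p(0,1)\cap B_0^p(0,r_0)$ --- precisely the regime controlled above. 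I then estimate $\langle\nabla\hat R_n(\theta_2)-\nabla\hat R_n(\theta_1),\Delta\rangle$ two ways. Writing it as $\int_0^1\langle\Delta,\nabla^2\hat R_n(\theta_t)\Delta\rangle\,dt$ and combining the population strong convexity $\lambda_{\min}(\nabla^2 R(\theta_t))\ge\kappa$ from (\ref{eqn02}) with the Part~1 bound gives $\langle\nu,\nabla^2\hat R_n(\theta_t)\nu\rangle\ge\kappa-\kappa/2=\kappa/2$, hence the inner product is at least $(\kappa/2)\|\Delta\|_2^2>0$. On the other hand, the stationarity conditions give $\nabla\hat R_n(\theta_i)=-\lambda_n\nu_i$ with $\nu_i\in\partial\|\theta_i\|_1$, so the same inner product equals $-\lambda_n\langle\nu_2-\nu_1,\Delta\rangle\le 0$ by monotonicity of the $\ell_1$ subdifferential. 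These two bounds are incompatible unless $\Delta=0$, forcing $\theta_1=\theta_2$.

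The main obstacle is the first part: pushing the uniform Hessian deviation below the \emph{fixed} target $\kappa/2$ (rather than merely $o(1)$) over the inflated sparse set of support size $r_0=C_0 s_0\log p$, and correctly tracking how this inflated sparsity combines with the $\log(np)$ covering cost to produce the $n\ge C_1 s_0\log^2 p$ requirement. Once that probabilistic bound is secured, the uniqueness argument is an entirely deterministic sandwich between the restricted strong convexity of $\hat R_n$ and the monotonicity of the $\ell_1$ penalty's subgradient.
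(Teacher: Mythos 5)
Your proposal matches the paper's proof essentially step for step: the first display is obtained by invoking the Lemma \ref{lemma3} sparse-Hessian machinery at the inflated sparsity level $r_0=C_0 s_0\log p$ (which is exactly where the $n\ge C_1 s_0\log^2 p$ requirement arises), and uniqueness follows from the same contradiction that combines the segment-integral curvature bound $\langle\nabla\hat{R}_n(\theta_2)-\nabla\hat{R}_n(\theta_1),\Delta\rangle\ge(\kappa/2)\|\Delta\|_2^2$ (via population strong convexity $\kappa$ from (\ref{eqn02}) minus the $\kappa/2$ deviation, with the correct support bookkeeping that the segment points are $r_0$-sparse) with monotonicity of the $\ell_1$ subdifferential and the stationarity conditions. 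The only difference is cosmetic --- you bound a single inner product two incompatible ways, while the paper adds the two inequalities to contradict stationarity --- so your argument is correct and essentially identical to the paper's.
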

 \begin{proof}
 According to (\ref{eqn02}), we have
 \begin{eqnarray}
 \underset{\theta\in B^p_2(\theta_0,\eta_1)}{\inf}\lambda_{\min}(\nabla^2 R(\theta))\ge \kappa.
 \end{eqnarray}
 By lemma \ref{lemma3}, there exists constant $C$ such that when $n\ge Cs_0\log^2 p,$
 \begin{eqnarray}
 \vv P\left(\underset{\theta\in B^p_2(\theta_0,\eta_1)\cap B^p_0(0,r_0)}{\inf}\underset{\nu\in B^p_2(0,1)\cap B^p_0(0,r_0)}{\inf} \langle \nu,(\nabla^2 \hat{R}_n(\theta))\nu \rangle \ge \kappa/2\right)\le \pi.
 \end{eqnarray}
 Suppose $\theta_1, \theta_2$ are two distinct stationary points of $\hat{L}_n(\theta)$ in $B^p_2(\theta_0,\eta_1)\cap B^p_0(0,r_0/2).$ Define
 $u=\frac{\theta_2-\theta_1}{||\theta_1-\theta_2||_2}.$ Since $\theta_1$ and $\theta_2$ are $r_0/2$-sparse, $u$ is $r_0$ sparse, as well as $\theta_1+tu$ for any $t\in \mathbb{R}.$ Therefore,
 \begin{eqnarray}\label{eq18}
 \langle \nabla \hat{R}_n(\theta_2), u \rangle &=&  \langle \nabla \hat{R}_n(\theta_1), u \rangle +\int_{0}^{||\theta_1-\theta_2||_2}\langle u,\nabla^2 \hat{R}_n(\theta_1+tu)u \rangle dt \nonumber\\
 &\ge&\langle \nabla \hat{R}_n(\theta_1), u \rangle +\frac{\kappa}{2}||\theta_2-\theta_1||_2.
 \end{eqnarray}
 Note the regularization term $\lambda_n||\theta||_1$ is convex, we have for any subgradients $\nu(\theta_1)\in \partial ||\theta_1||_1,$ $\nu(\theta_2)\in \partial ||\theta_2||_1,$
 \begin{eqnarray}\label{eq19}
 \lambda_n \langle \nu(\theta_2),u\rangle \ge \lambda_n \langle \nu(\theta_1), u \rangle.
 \end{eqnarray}
 Adding (\ref{eq18}) with (\ref{eq19}) gives
 \begin{eqnarray}
   \langle \nabla\hat{R}_n(\theta_2)+\lambda_n\nu(\theta_2),u\rangle \ge \langle \nabla\hat{R}_n(\theta_1)+\lambda_n\nu(\theta_1), u \rangle+\frac{\kappa}{2}||\theta_2-\theta_1||_2,
 \end{eqnarray}
 which is contradict with the assumption that $\theta_1$ and $\theta_2$ are two distinct stationary points of $\hat{L}_n(\theta).$
 \end{proof}
\textbf{Proof of Theorem \ref{thm3}}.
Now we are ready to prove Theorem \ref{thm3}. By Lemma \ref{lemma4} and Lemma \ref{lemma5}, as $n\ge Cs_0\log p,$ letting $\lambda_n\ge C_0M\sqrt{\frac{\log p}{n}}+\delta \frac{C_1}{\sqrt{s_0}},$ all stationary points of $L_n(\theta)$ are in $B_2^p(\theta_0,r_s)\cap \mathbb{A}\cap B^p_0(C_1s_0\log p),$ where
$r_s$ is defined in (\ref{rs}), $\mathbb{A}$ is the cone defined in Lemma \ref{lemma4}. This proves Theorem \ref{thm3}(a). Moreover, by Lemma \ref{lemma6}, Lemma \ref{lemma7}, as $n\ge C_2 s_0\log^2 p,$ $\hat{L}_n(\theta)$ cannot have two distinct stationary points in $B_2^p(\theta_0,\eta_1)\cap \mathbb{A}\cap B^p_0(C_1s_0\log p).$ Thus, as long as $\eta_1\ge r_s,$ there is only one unique stationary point of the regularized empirical risk function $\hat{L}_n(\theta),$ which is the corresponding regularized M-estimator of (\ref{form3}). This proves Theorem \ref{thm3} (b).

\textbf{Proof of Corollary \ref{cor1}:}
Huber's loss function is defined by
 \begin{eqnarray}
 \rho_{\alpha}(t)=\left\{
                    \begin{array}{ll}
\frac{1}{2}t^2, \quad\text{if $|t|\le \alpha$}\\
\alpha(|t|-\alpha/2), \quad \text{if $|t|> \alpha.$}
 \end{array}
                   \right.
  \end{eqnarray}

 the corresponding score function would be
  \begin{eqnarray}
 \psi_{\alpha}(t)= \rho_{\alpha}'(t)=\left\{
                    \begin{array}{ll}
t, \quad\text{if $|t|\le \alpha$}\\
sign(t)\alpha, \quad \text{if $|t|> \alpha.$}
 \end{array}
                   \right.
  \end{eqnarray}
Note for any $\alpha>0,$ all of $\psi(t)$, $\psi'(t)$ and $\psi''(t)$ are bounded. Specifically, we have $|\psi_{\alpha}(t)|\le \alpha,$ $|\psi'(t)|=|\psi''(t)|= 0.$
Therefore, the assumptions in Theorem \ref{thm1} and Theorem \ref{thm2} are satisfied. It is suffice to find the explicit expression of $\eta_0$ and $\eta_1$ in equation (\ref{eq4}) and (\ref{eq12}). Since $|\psi'(t)|=|\psi''(t)|= 0,$ it is easy to see $\eta_1=+\infty,$ which implies the Huber's estimator has nice computational tractability, regardless the choice of tuning parameter $\alpha$ and the percentage of outliers $\delta.$  Moreover, to find the explicit expression of $\eta_0,$ according to Assumption \ref{assume2}, we have $c_2=3, \gamma=1.$ Thus, we can calculate

\begin{eqnarray*}
h(z)&=&\int_{-\infty}^{+\infty}\psi_{\alpha}(z+\epsilon)f_0(\epsilon)d\epsilon=\int_{-\infty}^{\infty}\psi_{\alpha}(t)f_0(t-z)d t\\
&=&\int_{0}^{\alpha}t\left[f_0(t-z)-f_0(t+z)\right]d t+\alpha\int_{\alpha}^{+\infty}\left[f_0(t-z)-f_0(t+z)\right]d t\\
&\ge& \int_{0}^{\alpha}t\frac{1}{\sqrt{2\pi}\sigma}e^{-\frac{t^2+z^2}{2\sigma^2}}\left(\frac{tz}{\sigma^2}\right)dt +\alpha\int_{\alpha}^{+\infty}\frac{1}{\sqrt{2\pi}\sigma}e^{-\frac{t^2+z^2}{2\sigma^2}}\left(\frac{tz}{\sigma^2}\right)dt\\
&\ge&\frac{1}{\sqrt{2\pi}\sigma}e^{-\frac{\alpha^2+z^2}{2\sigma^2}}\int_{0}^{\alpha}t\left(\frac{tz}{\sigma^2}\right)dt +\frac{z\alpha}{\sigma^2}e^{-\frac{z^2}{2\sigma^2}}\int_{\alpha}^{+\infty}t\frac{1}{\sqrt{2\pi}\sigma}e^{-\frac{t^2}{2\sigma^2}} d t\\
&=&\frac{z\alpha^3}{3\sqrt{2\pi}\sigma^3}e^{-\frac{z^2+\alpha^2}{2\sigma^2}}+\frac{z\alpha}{\sqrt{2\pi}\sigma}e^{-\frac{z^2+\alpha^2}{2\sigma^2}}
\end{eqnarray*}

Therefore we have $H(s)=(\frac{\alpha^3}{3\sqrt{2\pi}\sigma^3}+\frac{\alpha}{\sqrt{2\pi}\sigma})e^{-\frac{s^2+\alpha^2}{2\sigma^2}}.$ By equation (\ref{eq4}) in the proof of Theorem \ref{thm1} yields
\begin{eqnarray}
\eta_0(\delta,\alpha)&=&\frac{\delta L_{\psi}}{(1-\delta)\frac{3}{4}H(\frac{8\tau r}{3}\sqrt{\frac{c_2}{\gamma}})\tau\gamma}\\
&=&\frac{\delta}{1-\delta}\frac{4\sqrt{2\pi}\sigma^3}{(\alpha^2+3\sigma^2)\tau}e^{\frac{\alpha^2+22\tau^2r^2}{2\sigma^2}},
\end{eqnarray}
which complete the proof.
\qed

\textbf{Proof of Corollary \ref{cor2}}:
 When the loss function is defined by $\rho_{\alpha}(t)=\frac{1-e^{-\alpha t^2/2}}{\alpha},$ the corresponding score function would be $\psi_{\alpha}(t)= \rho_{\alpha}'(t)=t e^{-\alpha t^2/2}.$ Moreover, we can get $\psi_{\alpha}'(t)=e^{-\alpha t^2/2}(1-\alpha t^2)$ and $\psi_{\alpha}''(t)=e^{-\alpha t^2/2}\alpha(\alpha t^2-3)$. Note for any $\alpha>0,$ all of $\psi_{\alpha}(t)$, $\psi_{\alpha}'(t)$ and $\psi_{\alpha}''(t)$ are bounded.
\begin{eqnarray}
|\psi_{\alpha}(t)|&\le& \sqrt{\frac{e}{\alpha}} \nonumber\\
|\psi_{\alpha}'(t)|&\le& \max\{1, 2e^{-1.5}\}=1\nonumber\\
|\psi_{\alpha}''(t)|&\le& \max\{e^{-(3+\sqrt{6})/2}\sqrt{(18+6\sqrt{6})\alpha}, e^{-(3-\sqrt{6})/2}\sqrt{(18-6\sqrt{6})\alpha}\}\le1.5\sqrt{\alpha}.\nonumber
\end{eqnarray}
Therefore, the Assumption \ref{assume1} is satisfied. It is suffice to find the explicit expression of $\eta_0$ and $\eta_1$ in equation (\ref{eq4}) and (\ref{eq12}). In order to have an accurate expression, we will use the individual bound of $\psi_{\alpha}(t),\psi'_{\alpha}(t),\psi''_{\alpha}(t)$ instead of the universal bound $L_{\psi}.$  Specifically, according to Assumption \ref{assume4}, $x_i$ is $\tau^2$-sub-Gaussian, $c_2=3, \gamma=1/3.$ Thus, we can calculate $h(z)=\int_{-\infty}^{+\infty}\psi_{\alpha}(z+\epsilon)f_0(\epsilon)d\epsilon=\frac{z}{(1+\alpha\sigma^2)^{3/2}}e^{-\frac{\alpha z^2}{2(1+\alpha\sigma^2)}}$
and $H(s)=\frac{1}{(1+\alpha\sigma^2)^{3/2}}e^{-\frac{\alpha s^2}{2(1+\alpha\sigma^2)}}.$ By equation (\ref{eq4}) in the proof of Theorem \ref{thm1} yields
\begin{eqnarray}
\eta_0(\delta,\alpha)&=&\frac{\delta L_{\psi}}{(1-\delta)\frac{3}{4}H(\frac{8\tau r}{3}\sqrt{\frac{c_2}{\gamma}})\tau\gamma}\\
&=&\frac{\delta}{1-\delta}\sqrt{\frac{e}{\alpha}}\frac{4(1+\alpha\sigma^2)^{3/2}}{\tau}e^{\frac{32\alpha r^2\tau^2}{3(1+\alpha\sigma^2)}}
\end{eqnarray}
Similarly, we can calculate $h'(0)=E_{f_0}\psi_{\alpha}'(\epsilon)=\frac{1}{(1+\alpha\sigma^2)^{3/2}}.$ Note $|\psi'_{\alpha}(t)|\le 1, |\psi''_{\alpha}(t)|\le 1.5\sqrt{\alpha},$ by equation (\ref{eq12}) in the proof of Theorem \ref{thm1} yields
\begin{eqnarray}
\eta_1(\delta,\alpha)&=&\frac{ (1-\delta)h'(0)\tau^2-\delta }{2\sqrt{3} \times1.5\sqrt{\alpha}\tau}\\
&=&\frac{1}{3\sqrt{3\alpha}(1+\alpha\sigma^2)^{3/2}\tau}\left[\tau^2-\delta(\tau^2+(1+\alpha\sigma^2)^{3/2})\right].
\end{eqnarray}
According to equation (\ref{eqn20}) in the proof of Theorem \ref{thm3}, we have with high probability, all stationary points of the empirical risk function $\hat{L}_n(\theta)$ in (\ref{exp2}) are inside the ball $B^p_2(\theta_0,r_s),$ where
\begin{eqnarray}
r_s&=&\eta_0+\frac{12C_{\pi}\tau \sqrt{(s_0\log p)/n}+2\tau\delta L_{\psi}}{(1-\delta)\frac{3}{4}H(\frac{8\tau r}{3}\sqrt{\frac{c_2}{\gamma}})\tau\gamma}\\
&=& (1+2\tau)\eta_0+\frac{16C_{\pi}\tau \sqrt{(s_0\log p)/n}}{(1-\delta)H(\frac{8\tau r}{3}\sqrt{\frac{c_2}{\gamma}})\tau\gamma}.
\end{eqnarray}
Therefore, as $n>>s_0\log p,$ we have $r_s\approx (1+2\tau)\eta_0,$
which completes the proof.
\qed

\bibliographystyle{apalike}
\bibliography{regression}

\end{document}